\newcommand{\djgbb}[1]{\mathbb{#1}}
\newcommand{\f}[1][p]{\djgbb{F}_{\scriptstyle #1}}
\newcommand{\C}{\djgbb{C}}
\newcommand{\z}{\djgbb{Z}}
\newcommand{\x}{V}
\newcommand{\I}{\mathcal{I}}
\newcommand{\IT}[1]{I (#1)}
\newcommand{\V}{\mathcal{Y}}
\newcommand{\Z}{\mathcal{Z}}
\newcommand{\ka}{\kappa}
\newcommand{\rmH}{\mathrm{H}}
\newcommand{\slh}{\text{\sl h}}
\newcommand{\h}[2]{\slh^{#1} (#2)}
\newcommand{\slch}{\text{\sl ch}}
\newcommand{\ch}[1]{\slch (#1)}
\newcommand{\TY}{\mathcal{T}}
\newcommand{\coho}[3]{\rmH^{#1}(#2,#3 )}
\newcommand{\hz}[1]{\coho*{#1}{\z}}
\newcommand{\hf}[3][p]{\coho{#2}{#3}{\f[#1]}}
\newcommand{\ex}[2][p]{{#1}^{1+{#2}}_+}
\newcommand{\mult}[1]{{#1}^{\times}}
\newcommand{\extr}[1]{extraspecial $#1$-group}
\newcommand{\cf}[1]{cf.~#1}
\newcommand{\elem}[1]{elementary abelian $#1$-group}
\newcommand{\elems}[1]{elementary abelian $#1$-subgroup}
\newcommand{\maxel}[1]{maximal elementary abelian subgroup}
\newcommand{\ti}[1]{totally isotropic}
\newcommand{\tis}[1]{\ti. subspace}
\newcommand{\mti}[1]{maximal \ti.}
\newcommand{\mtis}[1]{\mti. subspace}
\newcommand{\Quillen}[1]{Quillen's Theorem}
\newcommand{\Kuenneth}[1]{K\"unneth}
\newtheorem{theorem}{Theorem}
\newtheorem{prop}[theorem]{Proposition}
\newtheorem{lemma}[theorem]{Lemma}
\newtheorem{coroll}[theorem]{Corollary}
\newenvironment{proof}{\begin{trivlist} \item[\hskip \labelsep{\bf Proof:}]}%
{\foorp \end{trivlist}}
\newenvironment{proofof}[1]{\begin{trivlist}
\item[\hskip \labelsep{\bf Proof of #1:}]}{\foorp \end{trivlist}}
\newcommand{\foorp}{{\unskip\nobreak\hfil\penalty50
 \hskip1em\vadjust{}\nobreak\hfil \vrule height5pt width5pt depth0pt
 \parfillskip=0pt \finalhyphendemerits=0 \par}}
\newcommand{\Res}{\mathop{\text{\rm Res}}\nolimits}
\newcommand{\Ind}{\mathop{\text{\rm Ind}}\nolimits}
\newcommand{\Hom}{\mathop{\text{\rm Hom}}\nolimits}
\newcommand{\Span}{\mathop{\text{\rm span}}\nolimits}
\newcommand{\vyp}[3]{\oldstylenums{\bfseries #1}
(\oldstylenums{#2}),~\oldstylenums{#3}}
\newcommand{\vypp}[4]{\vyp{#1}{#2}{#3}--\oldstylenums{#4}}
\newcommand{\svypp}[5]{(\oldstylenums{#1}) \vypp{#2}{#3}{#4}{#5}}
\newcommand{\Dr}[1]{D_{#1}}
\newcommand{\DrV}[2]{\Dr{#1} (#2)}
\newcommand{\CnV}[1]{\Delta (#1)}
\newcommand{\DTs}[2]{D_{#1, #2}}
\newcommand{\Sym}[1]{S (#1)}
\newcommand{\Rn}[1]{R_{#1}}
\newcommand{\GL}{\textsl{GL}}
\newcommand{\kat}[1]{\ka_{#1}}
\newcommand{\xtil}{\tilde{x}}
\newcommand{\chihat}{\hat{\chi}}
\newcommand{\gamhat}{\hat{\gamma}}
\newcommand{\ctwo}[3][p]{#2 {#3}^{#1} - {#2}^{#1} #3}
\newcommand{\cS}{\Z}
\newcommand{\cT}{T}
\newcommand{\bL}{b_L}
\newcommand{\astar}{A^*}
\newcommand{\bstar}{B^*}
\newcommand{\pgp}{p^{1+2n}_+}
\title{Chern Classes and Extraspecial Groups}
\author{David J. Green\thanks{The support of the Deutsche
Forschungsgemeinschaft and the Max Kade Foundation is gratefully
acknowledged, as is the kind hospitality of the ETH Z\"urich.  Present address:
Dept of Math., Univ.~of Chicago,
5734 S.~University, Chicago IL60637, USA.} \\
Inst.\ f.\ Experimentelle Math. \\ Ellernstr.~29 \\ D--45326 Essen \\
\textsc{Germany} \\ \texttt{\small djg@math.uchicago.edu}
\and Ian J. Leary\thanks{The support of the Eidgen\"ossische Technische
Hochschule in Z\"urich and of the Leibniz Fellowship Programme is acknowledged
with thanks.} \\
Max-Planck-Inst.\ f.\ Math. \\
Gottfried-Claren-Str.~26 \\ D--53225 Bonn \\ \textsc{Germany} \\
\texttt{\small leary@mpim-bonn.mpg.de}}
\date{\makedate12/5/95/}
\begin{document}

\maketitle


\begin{abstract}
The mod-$p$ cohomology ring of the extraspecial $p$-group of exponent~$p$ is
studied for odd~$p$.  We investigate the subquotient~$\ch{G}$ generated by
Chern classes modulo the nilradical.  The subring of~$\ch{G}$ generated by
Chern classes of one-dimensional representations was studied by Tezuka and
Yagita.  The subring generated by the Chern classes of the faithful irreducible
representations is a polynomial algebra.  We study the interplay
between these two families of generators, and obtain some relations between
them.
\end{abstract}

\paragraph{Introduction}
One of the major outstanding problems in the cohomology of finite groups
is the determination of the cohomology rings of the extraspecial groups.
The case of extraspecial $2$-groups was solved elegantly and completely
by Quillen (see~\cite{Quillen:mod-2}), and there has been much work since
then on the extraspecial $p$-groups for odd~$p$.  On the one hand, the
cohomology ring of the extraspecial $p$-groups of order~$p^3$ and
exponent~$p$ has been determined by Lewis for integral coefficients
and by the second named author for mod-$p$ coefficients
(see \cite{Lewis},~\cite{Leary:mod-p})\@.
On the other hand, there have been major advances in the
general problem, which have concentrated on calculating the cohomology ring
modulo its nilradical.  Tezuka and Yagita calculated this up to
inseparable isogeny by a generalization of Quillen's methods
(see~\cite{Tezuka-Yagita:varieties}), and Benson and Carlson have collected
and refined the knowledge to date in their expository
paper~\cite{Benson-Carlson}.

In this paper we consider the problem of determining the cohomology ring
modulo its nilradical exactly, rather than up to inseparable isogeny.  We
shall attempt this by studying Chern classes of the irreducible representations
of the extraspecial group~$\ex{2n}$ which are obtained by inducing from
maximal abelian subgroups.
Chern classes will not in general generate the cohomology ring modulo its
nilradical, even for $p$-groups (see the paper~\cite{Leary-Yagita} of
Yagita and the second named author, where examples of order~$p^4$ and
rank~$2$ are given).
However, this approach does indeed give us new cohomology classes.

This is clearly seen by considering the group of order~$p^3$.
The cohomology ring modulo its radical of the extraspecial group~$\ex2$
is the quotient of the polynomial ring
$\f \lbrack \alpha, \beta, \kat{0}, \zeta \rbrack$ by the relations 
$\alpha^p \beta - \alpha \beta^p = 0$, $\alpha \kat{0} = - \alpha^p$,
$\beta \kat{0} = - \beta^p$ and
$\kat{0}^2 = \alpha^{2p-2} - \alpha^{p-1} \beta^{p-1} + \beta^{2p-2}$.
Here $\alpha$~and $\beta$ are first Chern classes of degree~$1$
representations, whereas $\kat{0}$~and $\zeta$ are Chern classes of a
degree~$p$ irreducible representation.
However, the best result known to date for general extraspecial groups only
says that this is up to inseparable isogeny the quotient of
$\f \lbrack \alpha, \beta, \zeta \rbrack$ by the one relation
$\alpha^p \beta - \alpha \beta^p = 0$.

It is very probable that the full description of the mod-$p$ cohomology
ring is exceedingly complicated: one just needs to look at the result
for the group of order~$p^3$ to get an idea of this.  It is not even certain
that it is practical to calculate the whole of the cohomology ring modulo
its radical.  But the example of the~$p^3$ case suggests that understanding
the Chern classes will be a major step in the right direction.
When investigating the cohomology ring of a $p$-group, the Chern subring
modulo nilradical is therefore a worthy and interesting object of study.

The outline of the paper is as follows.
Let~$p$ be an odd prime, and $n \geq 1$.
After recalling necessary information about group cohomology, Chern classes,
extraspecial $p$-groups and Dickson invariants, we obtain generators for the
Chern subring modulo nilradical in Proposition~\ref{prop:res_kz}.
For~$\pgp$, there are $3n+1$ generators: $\alpha_i$~and $\beta_i$ for
$1 \leq i \leq n$; $\kat{r}$~for $0 \leq r \leq n-1$; and~$\zeta$.
The~$\kat{r}$ are the new generators: the other elements generate the
subring studied by Tezuka and Yagita.
The~$\kat{r}$ are Chern classes of
a degree~$p^n$ faithful irreducible representation of~$\pgp$,
and restrict to \maxel.s as Dickson invariants.

Our aim is to understand the relationship between the new generators and
the old.  In Proposition~\ref{prop:kappa_n}, we obtain an elegant
alternating sum formula expressing~$\kat{0}^{2^n}$ as a polynomial in the
$\alpha_i$~and $\beta_i$.  Theorem~\ref{thm:kappa_integral}, the main
result of the paper, generalises this by showing that $\kat{r}^{2^{n-r} p^r}$
also lies in the subring~$\TY$ generated by the $\alpha_i$~and $\beta_i$.
The idea behind both these results is as follows.  Pick a \maxel.
$M$~of $\pgp$; find some expression (typically a Dickson invariant)
in the $\alpha_i$~and $\beta_i$ whose restriction to~$M$ equals that
of~$\kat{r}$; patch these approximations together; and then appeal to Quillen's
theorem, which states that the \maxel.s detect non-nilpotent elements.

In a separate paper~\cite{Green}, the first named author completes this project
for $n=2$ by obtaining a presentation for the Chern subring modulo nilradical
of~$p^{1+4}_+$.  In particular, it is shown that for all $n \geq 1$, the
Chern class~$\kat{0}$ lies
outside the subring studied by Tezuka and Yagita.


\paragraph{A theorem of Quillen}
In this paper we study cohomology rings modulo their nilradicals.  The
rationale for this is provided by the following theorem
of Quillen.

\begin{theorem}
\label{thm:Quillen_original}
{\em (Quillen~\cite{Quillen},\cite{Quillen-Venkov})}
Let~$G$ be a finite group, $p$~a prime number, and~$k$ a field of
characteristic~$p$.
Then a class $\xi \in \coho*Gk$ is nilpotent if and only if its restriction
to every \elems{p} of~$G$ is nilpotent.
\foorp \end{theorem}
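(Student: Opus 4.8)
The forward implication is trivial: each $\Res^G_E\colon\coho{*}{G}{k}\to\coho{*}{E}{k}$ is a ring homomorphism and so carries nilpotents to nilpotents. For the converse the plan is to make two routine reductions and then run an induction whose engine is the Quillen--Venkov lemma.

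\emph{Reductions.} Since $\f$ is perfect, $\coho{*}{G}{k}=\coho{*}{G}{\f}\otimes_{\f}k$ with the formation of the nilradical commuting with this base change, here and for every \elems{p}; so we may take $k=\f$. Next let $P\le G$ be a Sylow $p$-subgroup: the composite of $\Res^G_P$ with the transfer $\coho{*}{P}{\f}\to\coho{*}{G}{\f}$ is multiplication by $[G:P]$, a unit of $\f$, so $\Res^G_P$ is injective and $\xi$ is nilpotent if and only if $\Res^G_P\xi$ is; and since every \elems{p} of $P$ is one of $G$, the class $\Res^G_P\xi$ again satisfies the hypothesis. Thus we may assume $G$ is a $p$-group.

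\emph{The $p$-group case, by induction on $|G|$.} If $G$ is elementary abelian there is nothing to prove (take the subgroup to be $G$ itself). Otherwise every \elems{p} $E$ of $G$ is a proper subgroup, hence is contained in some maximal subgroup of $G$. Let $H_1,\dots,H_m$ be the maximal subgroups (finitely many); each is normal of index $p$, and the surjections $G\to\z/p$ with kernel $H_j$ are the nonzero scalar multiples of a class $u_j\in\coho{1}{G}{\f}$. By the inductive hypothesis applied to each $H_j$, whose elementary abelian subgroups are among those of $G$, the restriction $\Res^G_{H_j}\xi$ is nilpotent; fix $N$ with $\Res^G_{H_j}(\xi^N)=0$ for all $j$. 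Let $\beta$ denote the Bockstein. The Quillen--Venkov lemma (\cite{Quillen-Venkov}) asserts that $\ker(\Res^G_{H_j})$ is contained in the ideal $(u_j)$ if $p=2$, and in $(u_j,\beta u_j)$ if $p$ is odd; one obtains it from the Lyndon--Hochschild--Serre spectral sequence of $1\to H_j\to G\to\z/p\to1$, in which the kernel of restriction to $H_j$ is the first filtration term $F^1$ of $\coho{*}{G}{\f}$, and $F^1$ is the ideal generated by the image of $\coho{>0}{\z/p}{\f}$, hence by $u_j$ and $\beta u_j$ (by $u_j$ alone when $p=2$). Applying this, $\xi^N$ lies in each of the ideals $(u_j)$, resp.\ $(u_j,\beta u_j)$, so $\xi^{mN}$ lies in their product. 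When $p=2$ the product is the principal ideal on $u_1\cdots u_m$, the product of all nonzero elements of $\coho{1}{G}{\f}$, which is nilpotent by Serre's theorem since $G$ is not elementary abelian; hence $\xi$ is nilpotent. When $p$ is odd the product ideal is generated by the finitely many monomials $v_1\cdots v_m$ with $v_j\in\{u_j,\beta u_j\}$; any such monomial in which some $v_j$ equals $u_j$ squares to zero, since $u_j$ has odd degree and $p$ is odd, while the single remaining monomial $\beta u_1\cdots\beta u_m$ is nilpotent by Serre's theorem; so the product ideal is generated by nilpotents, is therefore itself nilpotent, and again $\xi$ is nilpotent.

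\emph{Where the difficulty lies.} The reductions, the passage to maximal subgroups and the manipulation of ideals are formal. The substance is twofold. First, the Quillen--Venkov lemma: for $p=2$ it is just the transfer (Gysin) exact sequence of the double cover $BH_j\to BG$, but for odd $p$ it requires a genuine computation with the spectral sequence above in order to identify $F^1$ --- this is the single most technical point. Second, Serre's theorem that a $p$-group which is not elementary abelian admits nonzero $u_1,\dots,u_k\in\coho{1}{G}{\f}$ with $\beta u_1\cdots\beta u_k=0$ (for $p=2$, with $u_1\cdots u_k=0$); the nilpotence of the product of the $\beta u_j$ (resp.\ of the $u_j$) over all lines then follows formally by raising to a power. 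Both inputs go back to \cite{Quillen} and \cite{Quillen-Venkov}; everything else is bookkeeping.
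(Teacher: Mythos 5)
The paper offers no proof of this theorem at all: it is quoted as known background, with references to Quillen and to Quillen--Venkov, and the statement is followed immediately by the end-of-proof box. So there is nothing internal to compare your argument with, and it must be judged on its own. Your overall strategy --- reduce to $k=\f$, reduce to a Sylow $p$-subgroup via the transfer, then induct on the order of a $p$-group over its maximal subgroups using the Quillen--Venkov lemma and Serre's theorem --- is exactly the standard published proof, and the reductions, the induction scheme and the final manipulation with Serre's theorem are all sound.

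The one genuine gap is at the step you yourself single out as the technical heart: the statement and justification of the Quillen--Venkov lemma for odd $p$. You assert $\ker(\Res_{H_j})\subseteq(u_j,\beta u_j)$ on the grounds that $\ker(\Res_{H_j})=F^1$ in the Lyndon--Hochschild--Serre spectral sequence and that $F^1$ is the ideal generated by the image of $\coho{>0}{\z/p}{\f}$. The first identification is correct; the second is not established by your argument. The column $E_2^{1,t}=\rmH^1(\z/p,\rmH^t(H_j,\f))$ is in general strictly larger than $u_j\cdot E_2^{0,t}$: for a nontrivial $\f[\z/p]$-module $M$ the cup-product map $\rmH^0(\z/p,M)\to\rmH^1(\z/p,M)$ need not be surjective (take $M$ to be the two-dimensional indecomposable module, where this map is zero but $\rmH^1\neq0$). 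What the spectral sequence does give is periodicity of $E_2^{s,t}$ in $s$ for $s\geq 2$, whence $F^2\subseteq(\beta u_j)$, together with $F^1\cdot F^1\subseteq F^2$; the correct (and standard) form of the lemma for odd $p$ is therefore that $\Res_{H_j}\lambda=0$ implies $\lambda^2\in(\beta u_j)$, not $\lambda\in(u_j,\beta u_j)$. Fortunately the repair costs only one more squaring: $\xi^{2N}\in(\beta u_j)$ for every $j$, hence $\xi^{2Nm}\in(\beta u_1\cdots\beta u_m)$, and Serre's theorem finishes as before --- indeed more simply, since the case analysis over monomials containing some odd-degree factor $u_j$ disappears entirely.
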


Let~$G$ be a finite group, and $p$~a prime number.  Define $\h*{G, \f}$
to be the quotient of the graded commutative ring $\hf*G$ by its
nilradical.  If the value of~$p$ is clear from the context, we will just
write~$\h*G$.

Of course, $\h*G$~is strictly commutative.
If $\phi \colon H \rightarrow G$ is a group homomorphism, then nilpotent
classes in $\hf*G$ are mapped under~$\phi^*$ to nilpotent classes
in $\hf*H$: hence~$\phi$ induces a well-defined ring homomorphism
$\phi^* \colon \h*G \rightarrow \h*H$.  In particular, there is
a well-defined restriction homomorphism if~$H$ is a subgroup of~$G$.\@
The version of Quillen's theorem that we shall use in this paper is
now a trivial corollary of Theorem~\ref{thm:Quillen_original}.

\begin{coroll}
Let~$G$ be a finite group; let~$p$ be a prime number; and let~$\xi$
be a class in~$\h*G$.  Then~$\xi$ is zero
if and only if $\Res_A \xi = 0$ in~$\h*A$ for every \elems{p} $A$~of $G$.
\foorp
\end{coroll}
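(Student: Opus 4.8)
The plan is to reduce the statement to Theorem~\ref{thm:Quillen_original} by passing to a representative of~$\xi$ modulo the nilradical. First I would pick a class $\tilde\xi \in \hf*G$ whose image under the quotient map $\hf*G \to \h*G$ is~$\xi$; such a $\tilde\xi$ exists because $\h*G$ is \emph{defined} as the quotient of $\hf*G$ by its nilradical. The single point that needs to be pinned down is that, for every subgroup $A \leq G$, the restriction $\Res_A \colon \h*G \to \h*A$ is exactly the homomorphism induced on quotients by ordinary restriction $\Res_A \colon \hf*G \to \hf*A$ --- but this is precisely the compatibility already recorded in the paragraph introducing $\h*G$ (restriction sends nilpotent classes to nilpotent classes, hence descends to the quotients). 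Granting this, $\Res_A \xi$ is the image of $\Res_A \tilde\xi$ in $\h*A$, so that $\Res_A \xi = 0$ in $\h*A$ if and only if $\Res_A \tilde\xi$ is nilpotent in $\hf*A$.

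The forward implication is then immediate, since a ring homomorphism sends $0$ to~$0$. For the converse, suppose $\Res_A \xi = 0$ in $\h*A$ for every \elems{p} $A$ of~$G$; by the previous remark $\Res_A \tilde\xi$ is nilpotent in $\hf*A$ for every such~$A$. Applying Theorem~\ref{thm:Quillen_original} with this~$G$, this~$p$, and $k = \f$ (which has characteristic~$p$) shows that $\tilde\xi$ is nilpotent in $\hf*G$, \ie lies in the nilradical, and therefore $\xi = 0$ in $\h*G$.

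As the authors note, there is no real obstacle here: once Quillen's theorem is available, the argument is a formal manipulation of quotient rings. The only step that is not pure bookkeeping is the identification of restriction on $\h*$ with the map induced from restriction on $\hf*$, and even that has effectively been done already in the discussion preceding the statement.
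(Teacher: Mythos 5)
Your argument is correct and is precisely the routine reduction to Theorem~\ref{thm:Quillen_original} that the paper has in mind: the authors omit the proof entirely, calling the corollary trivial once one knows that restriction descends to the quotients by the nilradical. Nothing further is needed.
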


\paragraph{Chern classes}
For a concise introduction to Chern classes of group representations, we
refer the reader to the appendix of~\cite{Atiyah}.
Although Chern classes strictly belong to $\hz{G}$, they can be considered as
elements of~$\h*G$ via the map
$\hz{G} \rightarrow \coho*G{\f} \rightarrow \h*G$.
Write~$\ch{G}$ for the subring of~$\h*G$ generated by Chern classes.
This algebra~$\ch{G}$ is a large subquotient of $\hf*G$;
any pair of elements may be compared in a straightforward way;
and the Chern classes of the irreducible representations form a finite set of
generators (by the Whitney sum formula).
This makes the algebra~$\ch{G}$ an object worthy of study.
The object of this paper is to investigate it
for $G$~extraspecial.

Because their restrictions can be calculated directly, Chern classes lend
themselves particularily well to a study using Quillen's theorem.  Taking
the Chern classes of a representation 
commutes with taking its restriction to a subgroup, and the Whitney sum
formula expresses the Chern classes of a direct sum of representations in terms
of the Chern classes of the summands.  After restricting a representation to
an abelian subgroup, the irreducible summands all have degree one;
and the Chern classes of such representations are very well understood.

The mod-$p$ cohomology rings of the \elem{p}s are well known, and may easily
be derived from the cohomology ring of the cyclic group using the \Kuenneth.
Theorem.
Let~$A$ be an \elem{p} of $p$-rank~$m$.  Then~$A$ is also an $m$-dimensional
$\f$-vector space.  Embed the additive group of $\f$~in $\mult{\C}$ by
sending $1$~to $\exp (2 \pi i / p)$.
This induces an isomorphism between $\Hom (A, \mult{\C})$, a group under
tensor product, and the dual vector space~$A^*$, a group under addition.
There is therefore a map $c_1 \colon A^* \rightarrow \h2A$ which sends an
element of~$A^*$ to the first Chern class of the corresponding one-dimensional
representation.  The induced map from the symmetric algebra $\Sym{A^*}$ to
$\h*A$ is an isomorphism of $\f$-algebras.  So~$\h*A$ is an integral domain,
and in fact a polynomial algebra.

\paragraph{Extraspecial Groups}
From now on we fix an odd prime~$p$, and denote by~$G$ the \extr{p} of
order~$p^{2 n + 1}$ and exponent~$p$.  There is a central extension
$1 \rightarrow N \rightarrow G \stackrel{\psi}{\rightarrow} E \rightarrow 1$
with $E$~an \elem{p} of $p$-rank~$2n$, and $N$ cyclic of order~$p$.
Identify $N$~with $\f$, and view~$E$ as a $2n$-dimensional $\f$-vector space.
There is a well-defined nondegenerate symplectic bilinear form
$b \colon E \times E \rightarrow N$ given by
$b (x_1, x_2) = [\xtil_1, \xtil_2]$
for any~$\xtil_1$, $\xtil_2 \in G$ such that $\psi (\xtil_i) = x_i$,
where $[a, b] = a b a^{-1} b^{-1}$.
There is a natural bijection between the set of \maxel.s $M$~of $G$ and
the set of \mtis.s $I$~of $E$, given by $I = M / N$ and $M = \psi^{-1} (I)$.
Every~$M$ has $p$-rank $n + 1$, and every~$I$ has dimension~$n$.

To determine the irreducible characters of~$G$, embed~$\f$ in~$\mult{\C}$
as before.
Let~$\chihat$ be a nontrivial linear character of~$N$, and define
$\chihat_j = \chihat^{\otimes j}$ for $1 \leq j \leq p-1$.

\begin{lemma}
\label{lem:irred_chars}
There are~$p^{2n}$ linear characters of~$G$, and all factor
through~$\psi$.  These correspond to the elements of~$E^*$.
The $p - 1$ remaining irreducible characters all have
degree~$p^n$ and are induced from any \maxel. of~$G$.  They may be labelled
$\chi_1$, \dots,~$\chi_{p-1}$ such that for every $1 \leq j \leq p-1$ and
every $g \in G$,
\begin{equation}
\label{eqn:induced_chars}
\chi_j (g) = \left\{ \begin{tabular}{ll}
$p^n \hat{\chi}_j (g)$ & if $g \in N$, and \\
0                      & otherwise. \end{tabular} \right.
\end{equation}
It follows that, for any \maxel. $M$~of $G$, the restriction~$\Res_M (\chi_j)$
is the sum of all those linear characters of~$M$ whose restriction to~$N$
is~$\hat{\chi}_j$.
\end{lemma}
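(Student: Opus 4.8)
The plan is to produce the $p-1$ non-linear irreducible characters explicitly as characters induced from a maximal elementary abelian subgroup, and then to verify by a dimension count that together with the linear characters they exhaust $\mathrm{Irr}(G)$. The linear characters are immediate: since $G$ is extraspecial, $[G,G] = N$, so the linear characters are exactly those of $G/N \cong E$. This gives $p^{2n}$ of them, all factoring through $\psi$, in natural bijection with $E^*$.

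For the rest, fix a maximal elementary abelian subgroup $M = \psi^{-1}(I)$, with $I$ a maximal totally isotropic subspace of $E$, and note that $M$ is normal in $G$ because it contains $N = Z(G)$ and $G/N$ is abelian. Choose a linear character $\lambda$ of $M$ with $\Res_N\lambda = \hat{\chi}_j$, and set $\chi_j = \Ind_M^G\lambda$; it has degree $[G:M] = p^n$. The one computation I would carry out is the conjugation identity: for $g\in G$ and $m\in M$ one has $g^{-1}mg = m\,c$ with $c\in N$ corresponding under the symplectic form to $b(\psi(m),\psi(g))$, so ${}^g\lambda(m) = \lambda(m)\,\hat{\chi}_j(b(\psi(m),\psi(g)))$. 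Hence ${}^g\lambda = \lambda$ if and only if $b(\psi(m),\psi(g)) = 0$ for all $m\in M$, i.e.\ $\psi(g)\in I^\perp = I$, i.e.\ $g\in M$: the inertia group of $\lambda$ is exactly $M$. Since $M$ is normal, Mackey's irreducibility criterion (equivalently, $\langle\Ind_M^G\lambda,\Ind_M^G\lambda\rangle = 1$) then gives that $\chi_j$ is irreducible, and the $G$-orbit of $\lambda$ has size $[G:M] = p^n$. As there are exactly $p^n$ linear characters of $M$ restricting to $\hat{\chi}_j$ on $N$ (a coset of $\Hom(I,\mult{\C})$), they form a single orbit; so $\chi_j$ is well defined, independent of $M$ and of the choice of $\lambda$.

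A count then closes the classification: $\chi_1,\dots,\chi_{p-1}$ have pairwise distinct central characters, hence are distinct, and $p^{2n}\cdot 1^2 + (p-1)(p^n)^2 = p^{2n+1} = |G|$, so the linear characters together with $\chi_1,\dots,\chi_{p-1}$ are all of $\mathrm{Irr}(G)$. The character formula (\ref{eqn:induced_chars}) then drops out of the induced-character formula: for $g\notin M$ no conjugate of $g$ lies in $M$ (as $M$ is normal), so $\chi_j(g) = 0$; for $g\in N$, $\chi_j(g) = [G:M]\,\lambda(g) = p^n\hat{\chi}_j(g)$ since $g$ is central; and for $g\in M\setminus N$, summing $\lambda$ over the conjugates of $g$ and using the identity above gives a nonzero multiple of $\sum_{e\in E}\hat{\chi}_j(b(\psi(g),e))$, which is a multiple of $\sum_{t\in\f}\hat{\chi}_j(t) = 0$ because $e\mapsto b(\psi(g),e)$ is a nonzero linear functional on $E$. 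Finally, the restriction statement follows from Mackey: $\Res_M\Ind_M^G\lambda = \sum_{x\in G/M}{}^x\lambda$ is the sum over the $G$-orbit of $\lambda$, which we have already identified with the set of all linear characters of $M$ restricting to $\hat{\chi}_j$ on $N$.

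The only real obstacle is the irreducibility step, namely pinning the inertia group of $\lambda$ down to exactly $M$; everything else is bookkeeping. That is precisely where the nondegeneracy of $b$ and the maximality of the isotropic subspace $I$ (so that $I = I^\perp$) are used.
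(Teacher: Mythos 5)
Your proof is correct and follows essentially the same route as the paper: induce a linear character from a maximal elementary abelian subgroup, verify the character formula~(\ref{eqn:induced_chars}) from the induced-character formula, and close the classification by summing squares of degrees. The only cosmetic difference is that you establish irreducibility by computing the inertia group of $\lambda$ (using $I = I^{\perp}$), where the paper instead reads $\langle\chi_j,\chi_j\rangle = 1$ off the explicit character values via the orthogonality relations.
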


\begin{proof}
Choose a \maxel. $M$~of $G$.  For each $1 \leq j \leq p-1$, pick a
linear character~$\hat{\chi}'_j$ of~$M$ whose restriction to~$N$
is~$\hat{\chi}_j$.  Define the character~$\chi_j$ of~$G$ to
be~$\Ind^G (\hat{\chi}'_j)$.  The character formula for induced characters then
shows that~(\ref{eqn:induced_chars}) is satisfied.
Using the orthogonality relations, and summing squares of degrees, it is seen
that we have all irreducible characters.
\end{proof}

\paragraph{Dickson invariants}
We will see in Proposition~\ref{prop:res_kz} that the Chern classes of the
induced representations restrict to \maxel.s as Dickson invariants.  We
now recall the salient facts about these invariants.
For a proof, see Benson's book~\cite{Benson:poly_invts}.

\begin{theorem}
\label{thm:Dickson}
{\em (Dickson)}
Let~$V$ be a finite dimensional $\f$-vector space, and let $m = \dim (V)$.
\begin{enumerate}
\item
\label{enum:DicksonInvtsExist}
There exist {\em Dickson invariants} $\DrV{0}{V}$, \dots, $\DrV{m-1}{V}$ in
the symmetric algebra~$\Sym{V}$, with $\DrV{r}{V}$~in $S^{p^m - p^{r}} (V)$ for
each $0 \leq r \leq m-1$, such that
\begin{equation}
\label{eqn:Dickson}
\prod_{v \in V} (X - v) =
X^{p^m} + \sum_{r = 0}^{m-1} (-1)^{m-r} \DrV{r}{V} X^{p^r} \; .
\end{equation}
\item
\label{enum:GLV_Invts}
The Dickson invariants are algebraically independent.  The
ring of invariants $\Sym{V}^{\GL (V)}$ is the polynomial
algebra $\f \lbrack \DrV{0}{V}, \ldots, \DrV{m-1}{V} \rbrack$.
%
\foorp
\end{enumerate}
\end{theorem}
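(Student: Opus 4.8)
The plan is to follow the classical route. For part~(\ref{enum:DicksonInvtsExist}) one builds $\DrV{r}{V}$ out of the polynomial $f_V(X) := \prod_{v \in V}(X - v) \in \Sym{V}[X]$; the crucial point is that $f_V$ is an \emph{additive} polynomial, i.e.\ $f_V(X+Y) = f_V(X) + f_V(Y)$, equivalently a linear combination of the powers $X^{p^r}$ with $0 \le r \le m$. I would prove additivity by induction on $m = \dim(V)$: choosing a hyperplane $W \subset V$ and $v_0 \in V \setminus W$, the space $V$ is the disjoint union of the cosets $W + c v_0$ with $c \in \f$, so
\[
f_V(X) = \prod_{c \in \f} f_W(X - c v_0) = \prod_{c \in \f} \bigl( f_W(X) - c\, f_W(v_0) \bigr) = f_W(X)^p - f_W(v_0)^{p-1} f_W(X),
\]
using the inductive hypothesis (so that $f_W$ is $\f$-linear) together with the identity $\prod_{c \in \f}(Y - c\lambda) = Y^p - \lambda^{p-1} Y$; the right-hand side is additive in~$X$ because $f_W$ is and because $Y \mapsto Y^p - \mu Y$ is additive for any fixed~$\mu$. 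Since $f_V$ is monic of degree~$p^m$, equation~(\ref{eqn:Dickson}) may be taken as the \emph{definition} of the $\DrV{r}{V}$. Each coefficient of $f_V$ is, up to sign, an elementary symmetric function of the $p^m$ linear forms making up~$V$, so $\DrV{r}{V}$ is homogeneous of degree $p^m - p^r$; and it lies in $\Sym{V}^{\GL(V)}$ because $\GL(V)$ merely permutes the roots of $f_V$, hence fixes $f_V$ and each of its coefficients. This proves~(\ref{enum:DicksonInvtsExist}) and the invariance asserted in~(\ref{enum:GLV_Invts}).

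Write $R = \f[\DrV{0}{V}, \dots, \DrV{m-1}{V}] \subseteq \Sym{V}$. For the algebraic independence, observe that every linear form $v \in V$ is a root of the monic polynomial $f_V \in R[X]$, so $v$ is integral over~$R$; as the $v$'s generate $\Sym{V}$ as an algebra, $\Sym{V}$ is a finitely generated $R$-module, hence integral over~$R$. Therefore $R$ and $\Sym{V}$ have the same Krull dimension, namely~$m$. A quotient of $\f[T_0, \dots, T_{m-1}]$ can have Krull dimension~$m$ only if it equals the whole polynomial ring, so the $\DrV{r}{V}$ are algebraically independent and $R \cong \f[T_0, \dots, T_{m-1}]$.

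It remains to upgrade the inclusion $R \subseteq \Sym{V}^{\GL(V)}$ to an equality. Since $R$ is a graded polynomial ring and $\Sym{V}$ is a finitely generated graded $R$-module that is Cohen--Macaulay of the same dimension~$m$, the Auslander--Buchsbaum formula forces $\Sym{V}$ to be a free $R$-module; comparing Poincar\'e series, its rank is
\[
\lim_{t \to 1}\; \prod_{r=0}^{m-1} \frac{1 - t^{p^m - p^r}}{1-t} = \prod_{r=0}^{m-1}(p^m - p^r) = |\GL(V)| .
\]
Hence $[\operatorname{Frac}\Sym{V} : \operatorname{Frac} R] = |\GL(V)|$. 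On the other hand, $\GL(V)$ acts faithfully on the field $\operatorname{Frac}\Sym{V}$, so by Artin's lemma this field is Galois over $\operatorname{Frac}\bigl(\Sym{V}^{\GL(V)}\bigr)$ with group $\GL(V)$, an extension of the same degree $|\GL(V)|$. In the tower $\operatorname{Frac} R \subseteq \operatorname{Frac}\bigl(\Sym{V}^{\GL(V)}\bigr) \subseteq \operatorname{Frac}\Sym{V}$ the outer degree therefore equals the right-hand one, forcing $\operatorname{Frac} R = \operatorname{Frac}\bigl(\Sym{V}^{\GL(V)}\bigr)$. Finally $R$, being polynomial, is integrally closed, and $\Sym{V}^{\GL(V)}$ is integral over~$R$ with the same fraction field, so the two rings coincide.

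Everything through the additivity of $f_V$ and the degree bookkeeping is routine, as is the transcendence-degree argument for algebraic independence. The real content is the last paragraph: pinning down $\Sym{V}^{\GL(V)}$ exactly requires knowing that $\Sym{V}$ is free over~$R$ of rank $|\GL(V)|$. I would get this from general commutative algebra (Cohen--Macaulayness plus Auslander--Buchsbaum) or, more concretely, by exhibiting an explicit monomial $R$-basis of $\Sym{V}$; either way this is the step that carries the weight. None of this is new --- it is the textbook proof recorded in Benson's book referenced above --- so the only real choice is how much of the commutative algebra to import versus reprove.
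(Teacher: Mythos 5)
Your proof is correct, and in fact the paper offers no proof of this theorem at all --- it simply cites Benson's book, and the argument you give (additivity of $\prod_{v \in V}(X - v)$ to define the $\DrV{r}{V}$, integrality plus Krull dimension for algebraic independence, then freeness of $\Sym{V}$ over $R$ together with the Poincar\'e series rank count and Artin's lemma to pin down $\Sym{V}^{\GL(V)}$ exactly) is precisely the standard proof recorded in that reference. Every step you invoke is sound as used, so there is nothing to compare against and nothing to fix.
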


In the literature, $\DrV{r}{V}$~is usually denoted~$c_{m, r}$.  New notation
is introduced here in order to identify the vector space~$V$ explicitly, and
to avoid a clash with the notation $c_r (\rho)$ for Chern classes.

We now describe the relationship with Dickson invariants of quotient spaces.
This takes a particularily elegant form when we work with dual spaces.

\begin{lemma}
\label{lem:Dickson_larger}
Let~$V$ be an $m$-dimensional $\f$-vector space, and~$U$ an
$\ell$-codi\-men\-sional subspace.  The inclusion of $U$~in $V$ induces a
restriction map $\Sym{V^*} \rightarrow \Sym{U^*}$ and, for every
$0 \leq r \leq m-1$,
\begin{equation}
\Res_U \left( \DrV{r}{V^*} \right) = \left\{ \begin{tabular}{l@{ }l}
$\DrV{r - \ell}{U^*}^{p^{\ell}}$ & if $\ell \leq r$, and \\
$0$ & otherwise.
\end{tabular} \right.
\end{equation}
\end{lemma}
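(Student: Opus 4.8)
The plan is to read the defining identity~(\ref{eqn:Dickson}) for $\DrV{r}{V^*}$ and for $\DrV{r}{U^*}$ and compare them after restriction. Applying the restriction map $\Sym{V^*} \to \Sym{U^*}$, which is a ring homomorphism preserving degrees and sending $X$ to $X$, to the identity
\begin{equation*}
\prod_{\phi \in V^*} (X - \phi) = X^{p^m} + \sum_{r=0}^{m-1} (-1)^{m-r} \DrV{r}{V^*} X^{p^r}
\end{equation*}
turns the left-hand side into $\prod_{\phi \in V^*} (X - \Res_U \phi)$. The key observation is that the restriction map $V^* \to U^*$ is surjective with kernel $\mathrm{Ann}(U)$ of dimension $\ell$, so each element of $U^*$ is hit exactly $p^{\ell}$ times. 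Hence the restricted product equals $\left( \prod_{\psi \in U^*} (X - \psi) \right)^{p^\ell}$.

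Next I would raise the Dickson identity for $U^*$ (an $(m-\ell)$-dimensional space) to the $p^\ell$ power, using that the Frobenius is additive in characteristic~$p$:
\begin{equation*}
\left( \prod_{\psi \in U^*}(X - \psi) \right)^{p^\ell} = \left( X^{p^{m-\ell}} + \sum_{s=0}^{m-\ell-1} (-1)^{m-\ell-s} \DrV{s}{U^*} X^{p^s} \right)^{p^\ell} = X^{p^m} + \sum_{s=0}^{m-\ell-1} (-1)^{m-\ell-s} \DrV{s}{U^*}^{p^\ell} X^{p^{s+\ell}}.
\end{equation*}
Then I equate this with the image of the right-hand side of the $V^*$-identity, namely $X^{p^m} + \sum_{r=0}^{m-1} (-1)^{m-r} \Res_U(\DrV{r}{V^*}) X^{p^r}$, and compare coefficients of the powers $X^{p^r}$ for $0 \le r \le m-1$. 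For $r < \ell$ there is no matching term on the other side, forcing $\Res_U(\DrV{r}{V^*}) = 0$; for $r \ge \ell$ we set $s = r - \ell$ and read off $\Res_U(\DrV{r}{V^*}) = (-1)^{(m-r)-(m-\ell-s)} \DrV{r-\ell}{U^*}^{p^\ell} = \DrV{r-\ell}{U^*}^{p^\ell}$, the sign being trivial since $(m-r) = (m-\ell-s)$. (One should note the degenerate case $m - \ell - 1 < 0$, i.e.\ $U = 0$, where $U^*$ has no Dickson invariants and every $\Res_U(\DrV{r}{V^*})$ vanishes; the formula as stated is vacuous there.)

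The only point requiring genuine care — and the main potential obstacle — is justifying that the monomials $X, X^{p}, \ldots, X^{p^m}$ occurring in these expansions are linearly independent over $\Sym{U^*}$, so that comparing coefficients is legitimate; this is immediate since they have distinct $X$-degrees. A secondary point is the combinatorial fact that restriction $V^* \to U^*$ is $p^\ell$-to-one, which follows from surjectivity (every linear functional on $U$ extends to $V$) and the rank–nullity count $\dim \mathrm{Ann}(U) = \ell$. Everything else is a formal manipulation with the Frobenius endomorphism and bookkeeping of signs.
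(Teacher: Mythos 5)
Your argument is correct and is exactly the computation the paper has in mind: its entire proof is ``Obvious from the definition of the Dickson invariants,'' and what you have written out --- restricting the defining product identity, observing that the surjection $V^* \to U^*$ is $p^{\ell}$-to-one so the product becomes a $p^{\ell}$-th power, applying Frobenius to the Dickson identity for $U^*$, and comparing coefficients of the $X^{p^r}$ --- is precisely the ``obvious'' verification being alluded to. No further comment is needed.
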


\begin{proof}
Obvious from the definition of the Dickson invariants.
\end{proof}
The top Dickson invariant~$\DrV{0}{V}$ is, up to a sign, the product of all
nonzero elements of~$V$.  A generalisation (due to Macdonald) of this
interpretation will play an important role in this paper.

\begin{theorem}
\label{thm:Macdonald}
{\em (Macdonald)}
Let~$V$ be an $m$-dimensional $\f$-vector space, and let $0 \leq r \leq m-1$.
Denote by~$\V_r$ the set of all $r$-dimensional subspaces of~$V$.
For any $Y \in \V_r$, write~$P_{V,Y}$ for the product
$\prod_{v \in V \setminus Y} v$ in the symmetric algebra~$\Sym{V}$.
Then
\begin{equation}
\label{eqn:Macdonald}
\DrV{r}{V} = (-1)^{m-r} \sum_{Y \in \V_r} P_{V, Y} \; .
\end{equation}
\end{theorem}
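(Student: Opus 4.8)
The plan is to prove the equivalent assertion that $\Phi_r := \sum_{Y \in \V_r} P_{V,Y}$ equals $(-1)^{m-r}\DrV{r}{V}$. Note first that $\Phi_r$ is homogeneous of degree $p^m - p^r$, since $|V \setminus Y| = p^m - p^r$ whenever $\dim Y = r$. The key observation is that $\Phi_r$ is $\GL(V)$-invariant: for $\phi \in \GL(V)$ acting on $\Sym{V}$ one has $\phi(P_{V,Y}) = \prod_{v \in V \setminus Y}\phi(v) = \prod_{v' \in V \setminus \phi(Y)} v' = P_{V,\phi(Y)}$, and since $\phi$ merely permutes the set $\V_r$ it follows that $\phi(\Phi_r) = \Phi_r$. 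By Theorem~\ref{thm:Dickson} (part~\ref{enum:GLV_Invts}), $\Phi_r$ is therefore a polynomial in $\DrV{0}{V}, \ldots, \DrV{m-1}{V}$.

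To identify that polynomial I would do a short degree count. If a monomial $\prod_{s=0}^{m-1}\DrV{s}{V}^{a_s}$ has degree $p^m - p^r$, put $w = \sum_s a_s \geq 1$; then $w p^m - \sum_s a_s p^s = p^m - p^r$, and since $\sum_s a_s p^s \leq w p^{m-1}$ this forces $w(p^m - p^{m-1}) < p^m$, that is, $w < p/(p-1) < 2$. Hence $w = 1$, so exactly one exponent is nonzero and equal to $1$, and comparing degrees shows it must be $a_r$. Thus the degree-$(p^m - p^r)$ part of $\f[\DrV{0}{V}, \ldots, \DrV{m-1}{V}]$ is the line spanned by $\DrV{r}{V}$, and $\Phi_r = c\,\DrV{r}{V}$ for some scalar $c$ which a priori may depend on $m$ and $r$.

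Finally I would pin down $c$ by passing to a quotient. Fix $W \leq V$ with $\dim W = r$ (recall $r \leq m-1$, so $\dim V/W = m - r \geq 1$), and let $\pi \colon \Sym{V} \to \Sym{V/W}$ be the ring surjection induced by $V \to V/W$. Applying $\pi$ to~(\ref{eqn:Dickson}), the image of $\prod_{v \in V}(X-v)$ is $\bigl(\prod_{\bar v \in V/W}(X - \bar v)\bigr)^{p^r}$ because each fibre of $V \to V/W$ has $p^r$ elements; comparing the coefficients of $X^{p^r}$ — using that $p^r$ is odd — then gives $\pi(\DrV{r}{V}) = \DrV{0}{V/W}^{p^r}$, which is just the quotient reformulation of Lemma~\ref{lem:Dickson_larger}. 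On the other hand $\pi(P_{V,Y}) = 0$ unless $W \subseteq Y$, and when $W \subseteq Y$ the fibres of $V \setminus Y \to (V/W)\setminus(Y/W)$ again have $p^r$ elements, so $\pi(P_{V,Y}) = P_{V/W,\,Y/W}^{p^r}$; in $\Phi_r$ only the term $Y = W$ survives (as $\dim Y = r = \dim W$), giving $\pi(\Phi_r) = \bigl(\prod_{\bar v \in V/W,\ \bar v \neq 0} \bar v\bigr)^{p^r}$. Now the coefficient of $X$ in $\prod_{v \in V}(X-v) = X\prod_{v \neq 0}(X-v)$ is $(-1)^{p^m - 1}\prod_{v \neq 0} v = \prod_{v \neq 0} v$, which by~(\ref{eqn:Dickson}) equals $(-1)^m \DrV{0}{V}$; applied to $V/W$ in place of $V$ this reads $\DrV{0}{V/W} = (-1)^{m-r}\prod_{\bar v \neq 0}\bar v$. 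Substituting into $\pi(\Phi_r) = c\,\pi(\DrV{r}{V})$ and cancelling the nonzero element $\bigl(\prod_{\bar v \neq 0}\bar v\bigr)^{p^r}$ of the domain $\Sym{V/W}$ yields $1 = c(-1)^{m-r}$, whence $c = (-1)^{m-r}$ and~(\ref{eqn:Macdonald}) follows.

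Along this route there is no single hard step: the substance is concentrated in the degree count, namely the fact that $p^m - p^r$ cannot be written as a sum of two or more of the degrees $p^m - p^s$, which is exactly what makes $\GL(V)$-invariance strong enough to force $\Phi_r$ onto the line through $\DrV{r}{V}$. The reason for taking this route is precisely to avoid the direct attack — expanding $\prod_{v \in V}(X-v)$ and arguing that the terms indexed by non-subspaces cancel — which is where I would expect the genuine difficulty to lie.
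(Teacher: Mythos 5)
Your proof is correct and follows essentially the same route as the paper's: establish $\GL(V)$-invariance, use the degree count to conclude the sum is a scalar multiple of $\DrV{r}{V}$, and fix the scalar by projecting onto $\Sym{V/W}$ for an $r$-dimensional $W$, where only the $Y=W$ term survives. You merely make explicit two steps the paper leaves implicit (the one-dimensionality of the relevant graded piece of the invariant ring, and the $r=0$ sign computation).
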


\begin{proof}
(\cf \cite{Benson:poly_invts}, p.~\oldstylenums{91})\@
The right hand side of~(\ref{eqn:Macdonald}) is clearly an invariant
of~$\GL(V)$, and so must be a scalar multiple of the left hand side.
Pick any $Y \in \V_r$, and project down onto~$\Sym{U}$, where $U = V / Y$.
This sends the left hand side to~$\DrV{0}{U}^{p^r}$.
On the right hand side, all summands are sent to zero,
except that~$P_{V,Y}$ is sent
to~$P_{U,0}^{p^r}$.  Therefore we need only establish the case $r=0$: but this
is an immediate consequence of Theorem~\ref{thm:Dickson}.
\end{proof}

\paragraph{Chern classes for extraspecial groups}
We start by considering the degree one representations.
Choose a symplectic basis~$A_1$, \dots, $A_n$,~$B_1$, \dots, $B_n$~for $E$.
That is, $A_i \perp A_j$, $B_i \perp B_j$ and $b (A_i, B_j) = \delta_{ij}$.
Take the corresponding dual basis~$\astar_1$, \dots, $\bstar_n$~for $E^*$,
and recall that one-dimensional representations of~$G$ are identified with
elements of~$E^*$.  For $1 \leq i \leq n$, define $\alpha_i = c_1 (\astar_i)$
and $\beta_i = c_1 (\bstar_i)$.  Equivalently,
consider the $\astar_i$~and $\bstar_i$ as elements of~$\h2E$ via the
isomorphism $\h*E \cong \Sym{E}$, and define $\alpha_i$,~$\beta_i$ to be
the inflations $\psi^*(\astar_i)$,~$\psi^*(\bstar_i)$ respectively.

Let~$\rho_1$ be a representation of~$G$ affording the induced
character~$\chi_1$.  Define $\kat{r} = (-1)^{n-r} c_{p^n - p^r} (\rho_1)$ for
$0 \leq r \leq n-1$, and $\zeta = c_{p^n} (\rho_1)$.
Let~$\gamma$ be the element of~$N^*$ corresponding to the nontrivial linear
character $\chihat$~of $N$.

\begin{prop}
\label{prop:res_kz}
The Chern subring of~$\h*{\pgp}$ is generated by~$\kat{0}$, \dots, $\kat{n-1}$,
$\zeta$, $\alpha_1$, $\beta_1$, \dots, $\alpha_n$~and $\beta_n$.
Let~$M$ be a \maxel. of~$\pgp$,
and $I = M / N$ the corresponding \mtis. of~$E$.
Notice that $\Sym{M^*} \cong \Sym{I^*} \otimes_{\f} \f \lbrack \gamma \rbrack$.
We have
\begin{equation}
\Res_M \kat{r}  = \DrV{r}{I^*} \quad \text{for $0 \leq r \leq n-1$, and} \quad
\Res_N \zeta = \gamma^{p^n} \; .
\end{equation}
\end{prop}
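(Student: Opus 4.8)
The plan is to compute the restriction of the Chern classes of $\rho_1$ by first restricting $\rho_1$ to $M$, then invoking the Whitney sum formula and the description of Chern classes on elementary abelian groups. By Lemma~\ref{lem:irred_chars}, $\Res_M(\chi_1)$ is the sum of all linear characters of~$M$ whose restriction to~$N$ equals~$\chihat$. Under the identification $\Hom(M, \mult{\C}) \cong M^*$, and since $\Sym{M^*} \cong \Sym{I^*} \otimes_{\f} \f[\gamma]$, these are precisely the elements of~$M^*$ of the form $w + \gamma$ with $w \in I^*$; there are $p^n$ of them. Hence the total Chern class satisfies $\Res_M\bigl(c(\rho_1)\bigr) = \prod_{w \in I^*} \bigl(1 + c_1(w+\gamma)\bigr) = \prod_{w \in I^*}\bigl(1 + w + \gamma\bigr)$, the product taken in $\h*M \cong \Sym{M^*}$, where I write $\gamma$ also for $c_1$ of the corresponding character of~$M$ (consistent with $\Res_N \zeta$).

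Next I would extract the homogeneous pieces. Setting $X = -\gamma$ formally, the product $\prod_{w \in I^*}(X + w + \gamma)$ is, up to the substitution $X \mapsto X + \gamma$, the Dickson polynomial $\prod_{v \in I^*}(X - v)$ of Theorem~\ref{thm:Dickson}\ref{enum:DicksonInvtsExist} for the $n$-dimensional space~$I^*$. More directly: $\prod_{w\in I^*}(1+w+\gamma)$ is obtained from $\prod_{v\in I^*}(Y - v) = Y^{p^n} + \sum_{r=0}^{n-1}(-1)^{n-r}\DrV{r}{I^*}\,Y^{p^r}$ by substituting $Y = -(1+\gamma)$ and multiplying by $(-1)^{p^n}$. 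Since $p$ is odd, $(-1)^{p^n} = -1$, and $(-(1+\gamma))^{p^r} = -(1+\gamma)^{p^r} = -(1 + \gamma^{p^r})$ in characteristic~$p$; thus
\begin{equation}
\Res_M\bigl(c(\rho_1)\bigr) = (1+\gamma)^{p^n} + \sum_{r=0}^{n-1}(-1)^{n-r}\DrV{r}{I^*}\bigl(1+\gamma^{p^r}\bigr).
\end{equation}
Now I read off components by total degree, noting $\DrV{r}{I^*}$ lies in $S^{p^n-p^r}(I^*)$, i.e.\ in cohomological degree $2(p^n - p^r)$. The degree-$2p^n$ part of the right side is $\gamma^{p^n}$, giving $\Res_M(\zeta) = \gamma^{p^n}$ (in particular $\Res_N \zeta = \gamma^{p^n}$, since $\Res_N$ factors through $\Res_M$ followed by killing $I^*$). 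The part in degree $2(p^n - p^r)$ is $(-1)^{n-r}\DrV{r}{I^*}$, so $c_{p^n - p^r}(\rho_1)$ restricts to $(-1)^{n-r}\DrV{r}{I^*}$, whence $\Res_M \kat{r} = (-1)^{n-r}c_{p^n-p^r}(\rho_1)$ restricts to $\DrV{r}{I^*}$, as claimed.

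Finally, the generation statement: by the Whitney sum formula the Chern subring of~$\h*G$ is generated by the Chern classes of all irreducible representations, i.e.\ by $\alpha_i, \beta_i$ (from the linear characters, since these correspond to a basis of $E^*$ whose first Chern classes generate $\Sym{E^*}$, and all other linear characters are products) together with the $c_k(\rho_j)$ for the faithful irreducibles $\rho_1,\dots,\rho_{p-1}$. One checks that $\rho_j$ and $\rho_1$ differ by an automorphism of~$G$ fixing~$E$ (scaling on~$N$), or more simply that $\chi_j = \chi_1^{\otimes ?}$-type relations plus the fact that non-nilpotent detection happens on maximal elementary abelians: the restriction of $c_k(\rho_j)$ to every~$M$ is again a Dickson invariant of $I^*$ (replace $\gamma$ by $j\gamma$, which only rescales and, since $\DrV{r}{I^*}$ is $\GL$-invariant, the restrictions agree up to the $p^r$-power structure), so modulo the nilradical $c_k(\rho_j)$ is already expressible via $\kat{r}$, $\zeta$. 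I expect the mild obstacle to be this last point — pinning down precisely why the Chern classes of $\rho_2, \dots, \rho_{p-1}$ contribute nothing new modulo nilradical — whereas the core computation is the clean substitution into the Dickson polynomial.
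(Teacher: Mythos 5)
Your proof follows essentially the same route as the paper's: restrict $\chi_1$ to a maximal elementary abelian subgroup~$M$ via Lemma~\ref{lem:irred_chars}, apply the Whitney sum formula to get $\prod_{v\in I^*}(1+\gamhat+v)$, recognise the Dickson polynomial, and read off the graded pieces; your substitution $Y=-(1+\gamma)$ is an unnecessary detour (since $v\mapsto -v$ permutes $I^*$ one may substitute $X=1+\gamhat$ directly), but it gives the same answer. The one point you flag as an obstacle --- why the $\rho_j$ for $j\geq 2$ contribute nothing new --- is disposed of in the paper by running the identical computation uniformly with the coset $j\gamhat+I^*$: the terms not involving $\gamhat$ are unchanged and the top-degree term acquires a factor $j^{p^r}=j$, so $c(\rho_j)=1+\kat{0}+\cdots+\kat{n-1}+j\zeta$ by Quillen's theorem, exactly as you sketch in your ``more simply'' alternative. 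Your other suggested route is false as stated: an automorphism of~$G$ inducing the identity on~$E$ preserves the commutator form~$b$ and hence acts trivially on~$N$, so no automorphism of~$G$ fixes~$E$ pointwise while scaling~$N$.
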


\begin{proof}
The $\alpha_j$~and $\beta_j$ arise from a basis for~$E^*$, and so the
first Chern class of every degree one representation is in their span.

The inflation map $\h2I \rightarrow \h2M$ is the inclusion of $I^*$~in $M^*$,
and the restriction map $\h2M \rightarrow \h2N$ is the projection of $M^*$~onto
$N^*$ with kernel~$I^*$.  Let~$\gamhat \in M^*$ be a representative of the
coset whose restriction to $N^*$~is $\gamma$.
It follows from Lemma~\ref{lem:irred_chars} that, for every
$1 \leq j \leq p-1$, the induced character~$\chi_j$ restricts to~$M$ as the
direct sum of the elements of the coset $j \gamhat + I^*$, considered as
linear characters of~$M$.
Let~$\rho_j$ be a representation of~$G$ affording~$\chi_j$.
Using the Whitney sum formula, the total Chern class of~$\rho_j$ is
\begin{align*}
\Res_M c(\rho_j) & = \prod_{v \in I^*} (1 + j \gamhat + v) \\
& = 1 + \sum_{r=0}^{n-1} (-1)^{n-r} \DrV{r}{I^*}
+ j  \left( \gamhat^{p^n} + \sum_{r=0}^{n-1} (-1)^{n-r} \DrV{r}{I^*}
\gamhat^{p^r} \right) \; .
\end{align*}
Therefore $c(\rho_j) = 1 + \kat{0} + \cdots + \kat{n-1} + j \zeta$, and the
restrictions are as claimed.
\end{proof}

\paragraph{Remark:}
An argument involving the Adams--Frobenius operations may 
be used to show that the
equalities $c_s (\rho_j)  =  j^s c_s(\rho_1)$ for all $s \geq 1$ hold
even in $\hz{G}$.

\paragraph{Important subrings}
There are two important subrings of~$\ch{G}$.  Both of these have the same
Krull dimension as~$\ch{G}$ itself, and together they generate~$\ch{G}$.
The structure of each of these subrings is known; but understanding
how elements of one ring relate to elements of the other is more complicated.
The core of this paper is an attempt to start understanding this
relationship.

The first subring is generated by~$\zeta$, the~$\alpha_i$ and the~$\beta_i$,
and was studied by Tezuka and Yagita.
The Chern classes which generate~$\ch{G}$ were originally defined in $\hz{G}$,
and therefore correspond to well-defined elements of $\hf*G$.
Let~$\TY$ denote the subring of~$\h*G$ generated by the $\alpha_i$~and
$\beta_i$.  For $r \geq 1$, let $\Rn{r} = \ctwo[p^r]{\alpha_1}{\beta_1}
+ \cdots + \ctwo[p^r]{\alpha_n}{\beta_n}$.

\begin{theorem}
\label{thm:Tezuka-Yagita}
{\em (Tezuka--Yagita~\cite{Tezuka-Yagita:varieties})}
The subalgebra $\TY$~of $\h*G$ is the quotient of the polynomial
algebra on the $\alpha_i$~and $\beta_i$ by the ideal generated by
the~$\Rn{r}$ for $1 \leq r \leq n$.  Moreover, $\Rn{r} = 0$ in $\h*G$ for
all $r \geq 1$.  The ring $\TY \otimes_{\f} \f \lbrack \zeta \rbrack$ is
in fact a subalgebra of $\hf*G$, and every element of $\hf*G$ has some power
lying in this subalgebra.
\foorp
\end{theorem}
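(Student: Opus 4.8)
I would deduce all three assertions from Quillen's theorem (via the Corollary above) together with an analysis of the closed subvariety of $E\otimes_{\f}\overline{\f}$ swept out by the rational maximal totally isotropic subspaces; the crux is a commutative-algebra statement about the ideal generated by the $\Rn{r}$. To set up: let $M$ run over the \maxel.s of $G$, put $I=M/N$, and — since every elementary abelian subgroup lies in a maximal one — embed $\h*G$ into $\prod_M\h*M$ by restriction, where $\h*M\cong\Sym{M^*}\cong\Sym{I^*}\otimes_{\f}\f[\gamhat]$ with $\Res_N\gamhat=\gamma$. Because $\Res_M\alpha_i$ and $\Res_M\beta_i$ lie in $I^*\subseteq\Sym{I^*}$, this embedding carries $\TY$ into $\prod_M\Sym{I^*}$, so the induced map $\TY\to\prod_M\Sym{I^*}$ is injective, and it is onto each factor because the images of $\astar_1,\dots,\bstar_n$ span $I^*$. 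Passing to $\overline{\f}$, the ring $\f[\alpha_1,\dots,\beta_n]$ is the coordinate ring of the affine space $E\otimes_{\f}\overline{\f}$ and $\Sym{I^*}$ that of the subspace $I\otimes_{\f}\overline{\f}$; a point $v$ with coordinates $x_i=\astar_i(v)$, $y_i=\bstar_i(v)$ satisfies $\Rn{r}(v)=\sum_i(x_iy_i^{p^r}-x_i^{p^r}y_i)=b(v,F^rv)$, where $b$ is the symplectic form and $F$ the $p$-power Frobenius (meaningful because the symplectic basis is $\f$-rational). Since $F^rv\in I\otimes_{\f}\overline{\f}$ when $v$ is, and $I$ is totally isotropic, this vanishes on every $I\otimes_{\f}\overline{\f}$, so $\Rn{r}=0$ in each $\Sym{I^*}$, hence in $\TY$, for all $r\geq1$.

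The substantive step is the presentation of $\TY$. We now have a surjection $\f[\alpha_1,\dots,\beta_n]/(\Rn{1},\dots,\Rn{n})\twoheadrightarrow\TY$, and since $\TY$ is reduced it suffices to prove that the ideal $J=(\Rn{1},\dots,\Rn{n})$ is radical with zero locus the finite (hence closed) union $Z:=\bigcup_M(I\otimes_{\f}\overline{\f})$ of $n$-dimensional linear subspaces. The inclusion $Z\subseteq V(J)$ is the vanishing just noted. Conversely, if $b(v,F^jv)=0$ for $1\leq j\leq n$, then $b(F^av,F^cv)=b(v,F^{c-a}v)^{p^a}=0$ for all $0\leq a\leq c\leq n$, so the $\overline{\f}$-span of $v,Fv,\dots,F^nv$ is totally isotropic, hence of dimension $\leq n$; these $n+1$ vectors are therefore dependent, the span of the whole Frobenius orbit of $v$ is a Frobenius-stable — hence $\f$-rational — totally isotropic subspace, and $v$ lies in $I\otimes_{\f}\overline{\f}$ for some rational maximal totally isotropic $I$. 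For radicality: $Z$ has dimension $n$, so $\Rn{1},\dots,\Rn{n}$ is a regular sequence in the Cohen--Macaulay ring $\f[\alpha_1,\dots,\beta_n]$ and the quotient is Cohen--Macaulay with no embedded primes; and at a general point $v$ of any component $I\otimes_{\f}\overline{\f}$ the Jacobian of $\Rn{1},\dots,\Rn{n}$ has rows $F(u),\dots,F^n(u)$ with $u=(y_1,\dots,y_n,-x_1,\dots,-x_n)$, a matrix of rank $n$ away from a finite union of proper subspaces of $I\otimes_{\f}\overline{\f}$ (otherwise the Frobenius orbit of every $u$ would span a proper subspace, which cannot cover $I\otimes_{\f}\overline{\f}$ over the infinite field $\overline{\f}$). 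Hence the quotient is generically reduced, so reduced; thus $\TY\cong\f[\alpha_1,\dots,\beta_n]/(\Rn{1},\dots,\Rn{n})$, $\TY$ has Krull dimension $n$, and $\Rn{r}\in J$ for all $r\geq1$ because $\Rn{r}$ vanishes on $Z$.

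For the refinement inside $\hf*G$: the classes $\alpha_i,\beta_i,\zeta$ lift canonically from $\hz{G}$ to $\hf*G$, and the relations $\Rn{r}=0$ ($1\leq r\leq n$) hold there already — this is the part of Tezuka and Yagita's computation one reads off the Lyndon--Hochschild--Serre spectral sequence of $1\to N\to G\to E\to1$, in which $\Rn{r}$ is, up to sign, the differential killing $\gamma^{p^{r-1}}$ while $\zeta$ detects $\gamma^{p^n}$. Moreover $\zeta$ is algebraically independent over the subalgebra generated by the $\alpha_i,\beta_i$: restricting a relation $\sum_ka_k\zeta^k=0$ to each $M$ and using that $\Res_M\zeta$ is monic of positive degree in $\gamhat$ over $\Sym{I^*}$, together with injectivity of $\TY\to\prod_M\Sym{I^*}$, forces every $a_k=0$. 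Hence the subalgebra of $\hf*G$ generated by $\alpha_i,\beta_i,\zeta$ is a quotient of $\TY\otimes_{\f}\f[\zeta]$ that remains injective after passing to $\h*G$ (by the previous paragraph), so it equals $\TY\otimes_{\f}\f[\zeta]$. Finally, that every element of $\hf*G$ has a power in $\TY\otimes_{\f}\f[\zeta]$ is the core of Tezuka and Yagita's result: one first checks that $\h*G$ is a finite module over $\TY\otimes_{\f}\f[\zeta]$ (since $\prod_M\h*M$ is finite over $\prod_M\Sym{I^*}[\Res_M\zeta]$, which is finite over $\TY\otimes_{\f}\f[\zeta]$), and then their Quillen-stratification analysis shows that a fixed $p$-th power of every element of $\h*G$ lands in the subalgebra; I would recall that argument rather than reconstruct it.

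The step I expect to be the main obstacle — if one wants a fully self-contained proof — is the last one, the inseparable-isogeny statement, which is genuinely the depth of Tezuka and Yagita's theorem. Within the elementary part, the delicate point is the radicality of $(\Rn{1},\dots,\Rn{n})$: identifying its zero locus by means of $\Rn{r}(v)=b(v,F^rv)$ and the Frobenius-orbit argument is the conceptual core, and verifying that the Moore-type Jacobian attains rank $n$ generically on each component is the fiddly part.
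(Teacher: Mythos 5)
The paper gives no proof of this statement at all: it is quoted from Tezuka and Yagita with a citation and closed immediately with the end-of-proof mark. So your proposal cannot be compared with ``the paper's own proof''; what you have done is reconstruct most of the cited result, and the reconstruction is essentially sound. The identification $\Rn{r}(v)=b(v,F^rv)$, the Frobenius-orbit argument showing that the zero locus of $(\Rn{1},\dots,\Rn{n})$ is exactly the union of the $I\otimes_{\f}\overline{\f}$ over the rational maximal totally isotropic subspaces $I$, and the radicality argument (the codimension count makes $\Rn{1},\dots,\Rn{n}$ a regular sequence, hence the quotient is Cohen--Macaulay and has no embedded primes, while the Moore-type Jacobian gives generic reducedness on each component) together yield a correct, essentially self-contained proof of the presentation of $\TY$ and of $\Rn{r}=0$ in $\h*G$ for all $r\geq 1$; the radicality step is the substantive point that upgrades the variety computation to an exact presentation, and you identify it correctly. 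Two ingredients remain genuinely imported from the cited source, as you acknowledge: the exact (not merely modulo nilradical) vanishing of $\Rn{r}$ in $\hf*G$, read off the Lyndon--Hochschild--Serre spectral sequence, without which $\TY\otimes_{\f}\f\lbrack\zeta\rbrack$ would only be a quotient of, rather than isomorphic to, the relevant subalgebra of $\hf*G$; and the inseparable-isogeny statement, which is indeed the depth of Tezuka and Yagita's theorem and which finiteness of $\hf*G$ over the subalgebra does not by itself deliver. Given that the theorem is itself an attribution, these deferrals are legitimate, and the rest of your argument (including the leading-coefficient-in-$\gamhat$ proof that $\zeta$ is polynomially independent over $\TY$) is correct.
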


The second large subring is generated by~$\zeta$ and the~$\kat{r}$.

\begin{prop}
\label{prop:no_zero_divisors}
The Chern classes $\kat{0}$, \dots, $\kat{n-1}$ and $\zeta$ are algebraically
independent over~$\f$.  Moreover, no polynomial in these elements is a zero
divisor in~$\h*G$.
The $\f$-algebra $\h*G$ is finite
over the subalgebra generated by the $\kat{r}$~and $\zeta$.
\end{prop}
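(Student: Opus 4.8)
The plan is to prove this by reducing everything to the restriction maps to maximal elementary abelian subgroups, where the relevant elements become Dickson invariants, and then invoking Quillen's theorem (in the form of the Corollary). First I would establish algebraic independence. By Proposition~\ref{prop:res_kz}, for each maximal elementary abelian $M$ with $I = M/N$ we have $\Res_M \kat{r} = \DrV{r}{I^*}$ and $\Res_M \zeta = \gamhat^{p^n} + \sum_{r} (-1)^{n-r} \DrV{r}{I^*} \gamhat^{p^r}$ inside $\Sym{M^*} \cong \Sym{I^*} \otimes \f[\gamma]$; actually it is cleaner to use a single $M$ and read off that $\Res_M \kat{0}, \dots, \Res_M \kat{n-1}, \Res_M \zeta$ are, together with $\gamhat$, essentially the Dickson invariants of $M^*$ augmented by $\gamhat$. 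Since $\DrV{0}{M^*}, \dots, \DrV{n}{M^*}$ are algebraically independent in $\Sym{M^*}$ by Theorem~\ref{thm:Dickson}\eqref{enum:GLV_Invts}, and $\Res_M$ is a ring homomorphism, any algebraic relation among the $\kat{r}$ and $\zeta$ would descend to one among their restrictions; so it suffices to check that the images generate a polynomial subalgebra of $\Sym{M^*}$ of the right transcendence degree. The restriction of $\zeta$ is the expansion $\prod_{v \in I^*}(X - v)$ evaluated appropriately, so $\{\Res_M \kat{r}\} \cup \{\Res_M \zeta\}$ generate the full Dickson algebra $\Sym{M^*}^{\GL(M^*)}$ after adjoining $\gamhat^{p^n}$; a transcendence-degree count then forces algebraic independence of $\kat{0}, \dots, \kat{n-1}, \zeta$ over $\f$.

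Next I would handle the non-zero-divisor claim. Let $f$ be a nonzero polynomial in the $\kat{r}$ and $\zeta$; I want to show $f$ is a non-zero-divisor in $\h*G$. Since $\h*G$ is a finitely generated $\f$-algebra which is reduced, its minimal primes correspond to the irreducible components of $\Spec \h*G$, and by Quillen stratification these components are the images of the $\Spec \h*M$ for $M$ ranging over (conjugacy classes of) maximal elementary abelian subgroups — in particular $\h*G$ is equidimensional of dimension $n+1$. An element of a reduced Noetherian ring is a zero-divisor if and only if it lies in some minimal prime; equivalently, if and only if its restriction to some component vanishes, i.e. $\Res_M f = 0$ in $\h*M$ for some $M$. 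But $\Res_M f$ is a polynomial in the $\Res_M \kat{r} = \DrV{r}{I^*}$ and $\Res_M \zeta$, which we have just shown (for each $M$) are algebraically independent; hence $\Res_M f \neq 0$ for every $M$, and $f$ is a non-zero-divisor. This simultaneously reproves algebraic independence, so the two halves of the proof share the same mechanism.

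Finally, finiteness of $\h*G$ over $B := \f[\kat{0}, \dots, \kat{n-1}, \zeta]$. By Theorem~\ref{thm:Tezuka-Yagita}, every element of $\hf*G$ has a power in $\TY \otimes \f[\zeta]$, so $\h*G$ is generated over $\f$ by finitely many elements each satisfying a monic polynomial over $\TY[\zeta] / (\text{nil})$; it therefore suffices to show $\TY[\zeta]$ modulo nilpotents is finite over $B$. For this I would again restrict to the $M$'s: $\TY$ restricts into $\bigoplus_M \Sym{I^*}$ with nilpotent kernel of the diagonal map (Quillen), each $\Sym{I^*}$ is a polynomial ring of Krull dimension $n$ on which $\DrV{0}{I^*}, \dots, \DrV{n-1}{I^*}$ form a homogeneous system of parameters by Theorem~\ref{thm:Dickson}\eqref{enum:GLV_Invts}, so $\Sym{I^*}$ is finite over $\f[\DrV{0}{I^*}, \dots, \DrV{n-1}{I^*}] = \Res_M B / (\gamhat)$; adjoining $\Res_M \zeta$ (a degree-$p^n$ element whose top term is $\gamhat^{p^n}$) makes $\Sym{M^*}$ finite over $\Res_M B$. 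Patching over the finitely many $M$ and using that $\h*G \hookrightarrow \bigoplus_M \h*M$ is injective (Quillen's theorem applied to $\h*G$, which is reduced) gives that $\h*G$ is a submodule of a module finite over $B$, hence finite over $B$ since $B$ is Noetherian.

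The main obstacle I expect is the bookkeeping in the finiteness step: one must be careful that "finite over $B$" is really controlled by the restrictions, i.e. that the injection $\h*G \hookrightarrow \prod_M \h*M$ together with each $\h*M$ being finite over $\Res_M B$ actually yields $\h*G$ finite over $B$. This needs the fact that a subalgebra of a product of finite $B$-algebras, on which the $B$-action is compatible via the restriction maps, is itself a finite $B$-module — true because $B$ is Noetherian and the product is a finite $B$-module, so any $B$-submodule is finite. Making sure the grading and the integrality degrees match up (and that $\zeta$ genuinely supplies the missing parameter transverse to $I^* \subset M^*$) is where the real work lies; the algebraic independence and non-zero-divisor parts are then essentially formal consequences of the Dickson-invariant computation in Proposition~\ref{prop:res_kz}.
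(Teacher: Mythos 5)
Your proof is correct, and your algebraic-independence step is essentially the paper's (both reduce, via Proposition~\ref{prop:res_kz}, to the algebraic independence of the Dickson invariants in $\Sym{M^*}\cong\Sym{I^*}\otimes\f[\gamma]$, with $\Res_M\zeta$ supplying the extra variable through its leading term $\gamhat^{p^n}$). The other two steps take genuinely different routes. For the zero-divisor claim the paper argues directly: if $f\xi=0$ with $\xi\neq 0$, Quillen's theorem provides a maximal elementary abelian subgroup $M$ with $\Res_M\xi\neq 0$; since $\h*M$ is an integral domain this forces $\Res_M f=0$, and since $\Res_M$ is injective on $\f[\kat{0},\dots,\kat{n-1},\zeta]$ (the images being algebraically independent) we get $f=0$. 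Your detour through minimal primes of a reduced Noetherian ring reaches the same conclusion but requires the additional (derivable, yet unstated) fact that every minimal prime of $\h*G$ is the kernel of some $\Res_M$; the paper's version needs only the detection form of Quillen's theorem actually quoted. For finiteness the paper simply appeals to Venkov's proof of the Evens--Venkov theorem applied to the faithful representation $\rho_1$, whose nonzero Chern classes are exactly the $\pm\kat{r}$ and $\zeta$; your argument---embedding $\h*G$ into the product of the $\h*M$ and checking that each $\Sym{M^*}$ is finite over the restriction of the subalgebra because the $\DrV{r}{I^*}$ form a system of parameters for $\Sym{I^*}$ and $\Res_M\zeta$ is monic of degree $p^n$ in $\gamhat$---is a valid, self-contained alternative, and in fact does not need the Tezuka--Yagita theorem you invoke at the start of that step. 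One small inaccuracy worth fixing: the restrictions of the $\kat{r}$ are Dickson invariants of $I^*$, not of $M^*$, so they do not generate $\Sym{M^*}^{\GL(M^*)}$ as you assert; your transcendence-degree count is unaffected.
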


\begin{proof}
Algebraic independence is a result of the algebraic independence of the
Dickson invariants.
By \Quillen., every non-zero element of~$\h*G$ has non-zero
restriction to~$\h*M$ for some \maxel.~$M$.  By Proposition~\ref{prop:res_kz},
restriction to~$M$ is an injection on the subring we consider.  But~$\h*M$
is an integral domain.
For the last part we appeal to Venkov's proof of the Evens--Venkov theorem,
since~$\rho_1$ is faithful.
\end{proof}

That the first of these subrings is not contained in the second is clear
by degree considerations.  Conversely, $\kat{0}$ lies in the Tezuka--Yagita
subring for no value of~$n$.  For $n=1$ this can be seen from Lewis'
paper~\cite{Lewis}, and it is proved in~\cite{Green} for general~$n$.

We start our investigation of the relationship between these two subrings
by establishing one simple identity.

\begin{lemma}
\label{lem:newrel}
Let~$x$ be one of the $\alpha_i$~or $\beta_i$; more generally, let~$x$ be the
first Chern class of a one-dimensional representation of~$G$. Then
\begin{equation}
\label{eqn:newrel}
x^{p^n} - x^{p^{n-1}} \kat{n-1} + \cdots + (-1)^r x^{p^{n-r}} \kat{n-r}
+ \cdots + (-1)^n x \kat{0} = 0 \; .
\end{equation}
\end{lemma}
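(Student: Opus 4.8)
The plan is to verify~(\ref{eqn:newrel}) after restricting to each \maxel. of~$G$, and then to invoke the Corollary to Theorem~\ref{thm:Quillen_original}; this is legitimate because both sides of~(\ref{eqn:newrel}) lie in the Chern subring, all of whose generators were restricted in Proposition~\ref{prop:res_kz}. First I would record that $G/N$ is the abelianisation of~$G$, so every one-dimensional representation of~$G$ is inflated along~$\psi$, and hence $x = \psi^*(\bar x)$ for some $\bar x \in \h2E = E^*$. Now fix a \maxel. $M$~of $G$ with $I = M/N$ the corresponding \mtis. of~$E$, so that $\Sym{M^*} \cong \Sym{I^*} \otimes_{\f} \f \lbrack \gamma \rbrack$. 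As noted in the proof of Proposition~\ref{prop:res_kz}, inflating from~$E$ and then restricting to~$M$ is dual to the map $M \twoheadrightarrow I \hookrightarrow E$ induced by~$\psi$; so $\Res_M x$ is the image~$u$ of~$\bar x$ under the restriction $E^* \to I^*$, and in particular $\Res_M x = u$ lies in the subalgebra $\Sym{I^*}$ of~$\Sym{M^*}$, with no component along~$\gamma$. Also $\Res_M \kat{r} = \DrV{r}{I^*}$ for $0 \leq r \leq n-1$, again by Proposition~\ref{prop:res_kz}.

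Substituting these restrictions into the left-hand side of~(\ref{eqn:newrel}) and re-indexing by $j = n-r$ gives
\begin{equation*}
\Res_M \Bigl( x^{p^n} + \sum_{r=1}^{n} (-1)^r x^{p^{n-r}} \kat{n-r} \Bigr)
= u^{p^n} + \sum_{j=0}^{n-1} (-1)^{n-j}\, \DrV{j}{I^*}\, u^{p^j} \; .
\end{equation*}
By Dickson's formula~(\ref{eqn:Dickson}) for the $n$-dimensional $\f$-vector space~$I^*$, the right-hand side is exactly $\prod_{v \in I^*} (X - v)$ evaluated at $X = u$. Since $u$ is itself an element of~$I^*$, the factor with $v = u$ vanishes, so the product is~$0$. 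Hence the left-hand side of~(\ref{eqn:newrel}) restricts to~$0$ on every \maxel. of~$G$, and is therefore~$0$ in~$\h*G$ by the Corollary to Theorem~\ref{thm:Quillen_original}.

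I do not expect a real obstacle; the only points requiring care are keeping the sign $(-1)^{n-j}$ of Dickson's expansion in step with the sign $(-1)^r$ of~(\ref{eqn:newrel}) (which the substitution $j = n-r$ handles), and being sure that $\Res_M x$ genuinely lands in $\Sym{I^*}$ rather than picking up a term in~$\gamma$ --- immediate, since $x$ is inflated from~$E$ and so vanishes on~$N$. A variant avoiding Quillen altogether uses the same observation differently: because $x$ is trivial on~$N$, tensoring~$\rho_1$ by the one-dimensional representation~$\lambda$ with $c_1(\lambda) = x$ leaves the central character unchanged, so $\lambda \otimes \rho_1 \cong \rho_1$; comparing top Chern classes gives $\sum_{k=0}^{p^n} x^{p^n - k} c_k(\rho_1) = c_{p^n}(\rho_1)$ in~$\hz{G}$, and projecting into~$\h*G$, where the total Chern class of~$\rho_1$ is $1 + \kat{0} + \cdots + \kat{n-1} + \zeta$, turns this into~(\ref{eqn:newrel}).
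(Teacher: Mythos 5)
Your main argument is exactly the paper's proof: restrict to each maximal elementary abelian subgroup~$M$, observe that $\Res_M x$ lies in~$I^*$ so that the restricted expression is the Dickson product $\prod_{v \in I^*}(\Res_M x - v) = 0$, and conclude by Quillen's theorem; you have merely spelled out the sign bookkeeping and the fact that $\Res_M x$ has no $\gamma$-component, both of which the paper leaves implicit. The tensor-by-a-linear-character variant you sketch at the end is a valid alternative, but the proof you actually give coincides with the paper's.
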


\begin{proof}
Let~$M$ be a \maxel. of~$G$, and $I$~the corresponding \mtis. of~$E$.
Then $\Res_M (x) \in I^*$, and so
$\prod_{v \in I^*} (\Res_M (x) - v) = 0$.
But, from the definition of the~$\kat{r}$, this product equals the
restriction to~$M$ of the left hand side of~(\ref{eqn:newrel}).  The result
follows by \Quillen..
\end{proof}

We would like to obtain all the relations between the generators of~$\ch{G}$.
In this paper we begin this task by investigating which powers of~$\kat{r}$
lie in the Tezuka--Yagita ring.

\paragraph{Bases}
In this section, we establish a result about the restrictions
of the $\astar_j$~and $\bstar_j$
to the dual space~$I^*$, for any \mtis. $I$~of $E$.
To this end,
we shall introduce a symplectic form~$\bL$ on~$E^*$, which will also play an
important role in subsequent sections.
For every $0 \leq r \leq n$, let $\Z_r$ denote the set of all
$r$-dimensional subspaces $V$~of $E^*$ which have a basis~$y_1$, \dots,~$y_r$
in which each~$y_i$ is either $\astar_i$~or $\bstar_i$.

\begin{prop}
\label{prop:extendZ_r}
Let $1 \leq r \leq n$, and let~$I$ be a \mtis. of~$E$.
Let $V \in \Z_{r-1}$, and suppose that the restriction of $V$~to $I^*$
also has dimension~$r-1$.
Then there exists an element of~$\Z_r$ which contains~$V$, and whose
restriction to~$I^*$ has dimension~$r$.

Hence, for every $0 \leq r \leq n$ and for every~$I$, there is at least one
$V \in \Z_r$ whose restriction to~$I^*$ has dimension~$r$.
\end{prop}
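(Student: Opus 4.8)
The plan is to prove the first (inductive) assertion; the final sentence of the proposition then follows by an immediate induction on~$r$, starting from the zero subspace (the unique member of~$\Z_0$), whose image in~$I^*$ has dimension~$0$, and applying the first assertion in turn for $r = 1, \dots, n$. For the inductive assertion, fix $1 \leq r \leq n$, a \mtis.~$I$ of~$E$, and $V \in \Z_{r-1}$ with basis $y_1, \dots, y_{r-1}$ (each $y_i$ equal to $\astar_i$ or $\bstar_i$). Write $K_I \subseteq E^*$ for the kernel of the restriction map $E^* \to I^*$; then $\dim K_I = n$, and the hypothesis on~$V$ says precisely that $V \cap K_I = 0$. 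If we can find $y_r \in \{\astar_r, \bstar_r\}$ with $y_r \notin V + K_I$, then the subspace $V' = \Span(y_1, \dots, y_r)$ lies in~$\Z_r$, contains~$V$, and meets~$K_I$ only in~$0$ (as $V \cap K_I = 0$ and $y_r \notin V + K_I$), so its image in~$I^*$ has dimension~$r$. Hence it is enough to prove that the plane $H_r := \Span(\astar_r, \bstar_r)$ is \emph{not} contained in $W := V + K_I$.

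For this I would work with the symplectic form~$\bL$ on~$E^*$, namely the form for which $\astar_1, \dots, \astar_n, \bstar_1, \dots, \bstar_n$ is a symplectic basis; equivalently, the form obtained by transporting~$b$ along the isomorphism $E \to E^*$ that sends~$x$ to $b(x, \cdot)$. Two properties of~$\bL$ are used. First, $K_I$ is \ti.\ for~$\bL$: under the transport isomorphism $K_I$ corresponds to the set of vectors in~$E$ orthogonal (with respect to~$b$) to all of~$I$, and this set equals~$I$ since~$I$ is a \mtis.\ of~$E$; as~$I$ is \ti.\ for~$b$, it follows that $K_I$ is \ti.\ for~$\bL$. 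Second, since~$V$ is spanned by vectors among $\astar_1, \bstar_1, \dots, \astar_{r-1}, \bstar_{r-1}$, each of which is $\bL$-orthogonal to both $\astar_r$ and $\bstar_r$ (the indices involved being distinct), we have $\bL(v, \astar_r) = \bL(v, \bstar_r) = 0$ for every $v \in V$; and for the same reason~$V$ is itself \ti.\ for~$\bL$.

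Now suppose, for a contradiction, that $H_r \subseteq W$, and write $\astar_r = v_1 + w_1$ and $\bstar_r = v_2 + w_2$ with $v_1, v_2 \in V$ and $w_1, w_2 \in K_I$. Then $\bL(v_1, w_2) = \bL(v_1, \bstar_r) - \bL(v_1, v_2) = 0$, and similarly $\bL(w_1, v_2) = -\bL(v_2, \astar_r) + \bL(v_2, v_1) = 0$. Combining these with $\bL(v_1, v_2) = 0$ and $\bL(w_1, w_2) = 0$ (isotropy of~$V$ and of~$K_I$) and expanding $\bL(\astar_r, \bstar_r) = \bL(v_1 + w_1, v_2 + w_2)$ bilinearly, we get $\bL(\astar_r, \bstar_r) = 0$, contradicting $\bL(\astar_r, \bstar_r) = 1$. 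Hence $H_r \not\subseteq W$, which proves the inductive assertion and with it the proposition.

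I do not anticipate a real obstacle here: the argument is short and rests entirely on the two properties of~$\bL$ recorded above. The only point that needs care is the verification that $K_I$ is $\bL$-isotropic --- equivalently, that the annihilator of a \mtis.\ of $(E, b)$ is \ti.\ for the transported form --- which comes down to the familiar fact that a \mtis.\ of a symplectic space is its own orthogonal complement. The reason the argument succeeds is ultimately just that $\bL(\astar_r, \bstar_r) \neq 0$, so $\astar_r$ and $\bstar_r$ cannot both lie in~$W$.
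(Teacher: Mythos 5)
Your proposal is correct and follows essentially the same route as the paper: both arguments transport $b$ to the symplectic form $\bL$ on $E^*$, identify $L(I)$ with the annihilator of $I$ (using $I^\perp = I$), and conclude by observing that $\Span(\astar_r,\bstar_r)$ is not totally isotropic whereas any subspace of $V + L(I)$ orthogonal to $V$ would be --- your contradiction computation with $\astar_r = v_1 + w_1$, $\bstar_r = v_2 + w_2$ is exactly the expansion in the paper's Lemma~\ref{lem:extendZ_r}. The only difference is presentational: you inline the lemma for the specific plane $\Span(\astar_r,\bstar_r)$ rather than stating it for general $U$.
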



The nondegenerate symplectic form $b$~on $E$ induces an $\f$-vector space
isomorphism $L \colon E \rightarrow E^*$ as follows: for all~$e$, $e' \in E$,
$L (e) (e') = b (e, e')$.  There is then a unique symplectic form~$\bL$
on~$E^*$ such that
$\bL \left(L(e), L(e') \right) = b (e, e')$,
for all~$e$, $e' \in E$.
Since~$b$ is nondegenerate, so is~$\bL$.

\begin{lemma}
\label{lem:extendZ_r}
Let $U$~and $V$ be subspaces of~$E^*$, and let~$I$ be a \ti. subspace of~$E$.
If $U \perp V$, if $V$~is \ti., and if $U \subseteq V + L (I)$, then
$U$~is \ti..
\end{lemma}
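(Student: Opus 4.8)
The plan is to prove the statement by a direct computation with the form~$\bL$: I will show that $\bL(u_1,u_2)=0$ for all $u_1,u_2\in U$. Fix such a pair. Since $U\subseteq V+L(I)$, write $u_i=v_i+L(e_i)$ with $v_i\in V$ and $e_i\in I$ for $i=1,2$. Expanding $\bL(u_1,u_2)$ by bilinearity produces the four terms $\bL(v_1,v_2)$, $\bL(v_1,L(e_2))$, $\bL(L(e_1),v_2)$ and $\bL(L(e_1),L(e_2))$, and it is enough to check that each of them vanishes.

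The two ``diagonal'' terms are immediate. First, $\bL(v_1,v_2)=0$ because $V$ is \ti.. Second, by the defining property of~$\bL$ we have $\bL(L(e_1),L(e_2))=b(e_1,e_2)$, and this is zero because $I$ is \ti.. The two ``cross'' terms are where I would bring in the hypothesis $U\perp V$, which is a symmetric condition since $\bL$ is alternating. As $u_1\in U$ and $v_2\in V$, we have $\bL(u_1,v_2)=0$; substituting $u_1=v_1+L(e_1)$ and using $\bL(v_1,v_2)=0$ leaves $\bL(L(e_1),v_2)=0$. Interchanging the two indices, $\bL(u_2,v_1)=0$ together with $\bL(v_2,v_1)=0$ gives $\bL(L(e_2),v_1)=0$, hence $\bL(v_1,L(e_2))=0$. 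So all four terms vanish, $\bL(u_1,u_2)=0$, and $U$ is \ti..

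I do not expect any genuine obstacle here; the lemma is essentially a bookkeeping exercise in bilinearity. The two points to keep straight are that ``$U\perp V$'' must be applied with one argument taken from~$U$ and the other from~$V$ (never with both arguments in~$U$, which is exactly what we are trying to prove), and that it is the total isotropy of~$V$, not merely of~$U$, that allows the cross terms to be reduced. The hypothesis $U\subseteq V+L(I)$ enters only through the decomposition $u_i=v_i+L(e_i)$, and the subspace~$L(I)$ matters only via the total isotropy of~$I$.
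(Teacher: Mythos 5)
Your proof is correct and is essentially the paper's own argument: both expand $\bL(u_1,u_2)$ by bilinearity after writing $u_i=v_i+L(e_i)$, and kill the four resulting terms using the total isotropy of $V$ and $I$ together with $U\perp V$ for the cross terms. The only cosmetic difference is that the paper groups the expansion as $\bL(v,u')+\bL(u,v')+\bL(L(i),L(i'))-\bL(v,v')$ so that each summand vanishes directly by hypothesis, whereas you reduce the cross terms in a separate step.
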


\begin{proof}
Let~$u$, $u' \in U$.
Since $U \subseteq V + L (I)$, there exist $v$, $v' \in V$ and $i$, $i' \in I$
such that $u = v + L (i)$ and $u' = v' + L (i')$.  Then
$\bL (u, u') = \bL (v, u') + \bL (u, v') + \bL (L(i), L(i')) - \bL (v, v')$,
and each of these terms is zero by assumption.
\end{proof}

\begin{proofof}{Proposition~\ref{prop:extendZ_r}}
Let $U = \Span (\text{$\astar_r$, $\bstar_r$})$.  Then
$V$~is totally iso\-tropic and $U \perp V$, but $U$~is not \ti..
Hence, by the Lemma, $U$~is not contained in $V + L (I)$.  In particular,
at least one of $\astar_r$,~$\bstar_r$ does not lie in $V + L(I)$.  But
$u \in V + L(I)$ if and only if the restriction of~$u$ to~$I^*$ lies in the
restriction of~$V$.  The last part follows by induction on~$r$.
\end{proofof}

\paragraph{Characteristic functions}
In this section we show that~$\kat{0}^{2^n}$ lies in the Tezuka--Yagita
subring~$\TY$.  This is a special case of Theorem~\ref{thm:kappa_integral},
and the results of this section are not necessary to prove that theorem.
However, the methods we use are more transparent here than in the general case,
and we also succeed in establishing an elegant formula for~$\kat{0}^{2^n}$.

We shall prove that $\kat{0}^{2^n} \in \TY$ using characteristic functions,
analogously to the alternating sum formula for the measure of a finite union.
Of fundamental importance is the following special case of
Lemma~\ref{lem:Dickson_larger}.
Let~$V$ be an $n$-dimensional subspace of~$E^*$, let~$M$ be a \maxel. of~$G$,
and let~$I$ be the corresponding \mtis.  of~$E$.  Then
\begin{equation}
\Res_M (\DrV{0}{V}) = \left\{ \begin{array}{l@{\quad}l}
\DrV{0}{I^*} & \text{if $\Res_I (V)$ is the whole of~$I^*$, and} \\
0 & \text{otherwise.}
\end{array} \right.
\end{equation}

Recall that~$\Z_n$ denotes the set of all $n$-dimensional subspaces
of~$E^*$ which have a basis of the form~$y_1$, \dots,~$y_n$ such that
each~$y_i$ is either $\astar_i$~or $\bstar_i$.
In this section we will write $\cS$~for $\Z_n$.
Note that~$\cS$ has cardinality~$2^n$.

We now introduce some more notation for this section.
Let~$\I$ denote the set of all \mtis.s of~$E$.
For each subset $\cT$~of $\cS$, define $\IT{\cT}$ to be the set of all
$I \in \I$ such that $\Res_I (\x) = I^*$ for every $\x \in \cT$.
If $\x \in \cS$, write~$\IT{\x}$ for~$\IT{\{\x\}}$.

For any subset $\cT$~of $\cS$, define
$\chi_{\cT} \colon \I \rightarrow \{ \text{$0$, $1$} \}$
to be the characteristic function of~$\IT{\cT}$: that is, for $I \in \I$,
\begin{equation}
\label{eqn:chi_T}
\chi_{\cT} (I) = \left\{ \begin{tabular}{l@{ }l}
$1$ & if $I \in \IT{\cT}$, and \\
$0$ & otherwise.
\end{tabular} \right.
\end{equation}
For $\x \in \cS$, write~$\chi_{\x}$ for~$\chi_{\{\x\}}$.
The following result is now a consequence of Quillen's Theorem.

\begin{lemma}
\label{lem:chi_T}
Let $s \geq 1$; let~$\cT$ be a non-empty subset of~$\cS$ such that
$| \cT | \leq s$;
and let~$\x_1$, \dots,~$\x_s$ be a sequence of elements
of~$\cT$ in which each element of~$\cT$ appears at least once.
Define
\begin{equation}
\DTs{\cT}{s} = \psi^* \left( \prod_{j=1}^s \DrV{0}{\x_j} \right)
\quad \text{in $\h{2 s (p^n - 1)}G$.}
\end{equation}
Let~$M$ be a \maxel. of~$G$, and let~$I$ be the corresponding \mtis. of~$E$.
Then
\begin{equation}
\label{eqn:indicator_T}
\Res_M (\DTs{\cT}{s}) = \chi_{\cT} (I) \DrV{0}{I^*}^s \; ,
\end{equation}
and so $\DTs{\cT}{s}$ is independent of the choice of the~$\x_j$,
which justifies the notation. \foorp
\end{lemma}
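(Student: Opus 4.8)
The plan is to compute the restriction $\Res_M(\DTs{\cT}{s})$ directly and then invoke the Corollary to Quillen's theorem to deduce the independence of $\DTs{\cT}{s}$ from the chosen sequence. First I would observe that restriction commutes with products and with the inflation map $\psi^*$, so that
\begin{equation*}
\Res_M(\DTs{\cT}{s}) = \prod_{j=1}^s \Res_M\bigl(\psi^*(\DrV{0}{\x_j})\bigr) = \prod_{j=1}^s \Res_I(\DrV{0}{\x_j}),
\end{equation*}
where I am using that $\psi$ restricts to the quotient map $M \to I$ and that the inflation $\h2I \to \h2M$ is the inclusion $I^* \hookrightarrow M^*$. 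Here each $\DrV{0}{\x_j}$ lies in $\Sym{\x_j} \subseteq \Sym{E^*} \cong \h*E$, and restriction to $I$ is induced by $\Res_I \colon E^* \to I^*$.

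Next I would apply the boxed special case of Lemma~\ref{lem:Dickson_larger} stated just before Proposition~\ref{prop:extendZ_r} (with $V = \x_j$, which is an $n$-dimensional subspace of $E^*$ since $\x_j \in \cS = \Z_n$): each factor $\Res_I(\DrV{0}{\x_j})$ equals $\DrV{0}{I^*}$ if $\Res_I(\x_j) = I^*$, and equals $0$ otherwise. Now unwind the definitions: $\Res_I(\x_j) = I^*$ for a given $j$ exactly when $I \in \IT{\x_j}$. Since the sequence $\x_1,\dots,\x_s$ runs through every element of $\cT$ at least once, the product $\prod_{j=1}^s \Res_I(\DrV{0}{\x_j})$ is nonzero precisely when $\Res_I(\x_j) = I^*$ holds for \emph{all} $j$, equivalently when $\Res_I(\x) = I^*$ for every $\x \in \cT$, equivalently when $I \in \IT{\cT}$, i.e.\ when $\chi_{\cT}(I) = 1$; and in that case every factor equals $\DrV{0}{I^*}$, so the product is $\DrV{0}{I^*}^s$. (The hypothesis $|\cT| \leq s$ is what makes such a surjective sequence of length $s$ exist; it plays no role beyond that.) This yields equation~(\ref{eqn:indicator_T}).

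Finally, for the independence claim: suppose $\x_1,\dots,\x_s$ and $\x_1',\dots,\x_s'$ are two sequences in $\cT$ each hitting every element of $\cT$, with associated products $D$ and $D'$ in $\h{2s(p^n-1)}G$. By the computation above, $\Res_M(D) = \chi_{\cT}(I)\DrV{0}{I^*}^s = \Res_M(D')$ for every \maxel.~$M$ of~$G$. Hence $\Res_M(D - D') = 0$ in $\h*M$ for every such $M$, and by the Corollary to Theorem~\ref{thm:Quillen_original} (the \maxel.s detect nonzero elements of $\h*G$) we conclude $D = D'$ in $\h*G$. This justifies the notation $\DTs{\cT}{s}$.

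There is no real obstacle here; the one point requiring a little care is making the combinatorial observation precise, namely that a product over a surjective sequence vanishes at $I$ unless \emph{every} element of $\cT$ restricts onto $I^*$, which is exactly the condition defining $\IT{\cT}$ and hence $\chi_{\cT}$. Everything else is a matter of correctly tracking how $\Res_M$, $\psi^*$, and the identifications $\h*E \cong \Sym{E^*}$, $\h*M \supseteq \h*I \cong \Sym{I^*}$ interact.
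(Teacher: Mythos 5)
Your argument is correct and is exactly the one the paper intends: the lemma is stated without proof (the end-of-proof box follows the statement), the restriction formula being the displayed special case of Lemma~\ref{lem:Dickson_larger} applied factor by factor, combined with the fact that $\h*M$ is an integral domain, and the independence claim being the advertised ``consequence of Quillen's Theorem.'' Your write-up supplies the details the authors omitted, with no gaps.
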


\begin{lemma}
\label{lem:MT_properties}
Let $\cT_1$~and $\cT_2$ be subsets of~$\cS$.  Then
\begin{enumerate}
\item \label{enum:cover}
$\displaystyle\bigcup_{\x \in \cS} \IT{\x} = \I$.
\item \label{enum:intersection}
$\IT{\cT_1} \cap \IT{\cT_2} = \IT{\cT_1 \cup \cT_2}$, and therefore
$\chi_{\cT_1 \cup \cT_2} = \chi_{\cT_1} \chi_{\cT_2}$.
\item \label{enum:union}
$\displaystyle\prod_{\x \in \cS} (1 - \chi_{\x}) = 0$.
\end{enumerate}
\end{lemma}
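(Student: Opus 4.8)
The proof breaks into three independent pieces, and all the substance is already available: the statements about intersections and about the vanishing product are purely formal, while the covering statement is a restatement of the closing line of Proposition~\ref{prop:extendZ_r}.

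For the first part, $\bigcup_{\x \in \cS} \IT{\x} = \I$, I would fix an arbitrary $I \in \I$ and apply Proposition~\ref{prop:extendZ_r} with $r = n$: it supplies some $V \in \Z_n = \cS$ whose restriction to $I^*$ has dimension $n$. Since $\dim I^* = \dim I = n$, this forces $\Res_I(V) = I^*$, that is, $I \in \IT{V}$. As $I$ was arbitrary, the sets $\IT{\x}$ cover $\I$, and the reverse inclusion $\bigcup_{\x \in \cS}\IT{\x} \subseteq \I$ is trivial.

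For the second part, I would read off $\IT{\cT_1} \cap \IT{\cT_2} = \IT{\cT_1 \cup \cT_2}$ directly from the definition: an element $I$ of $\I$ lies in the left-hand side exactly when $\Res_I(\x) = I^*$ holds for all $\x \in \cT_1$ and for all $\x \in \cT_2$, which is the same condition as requiring it for all $\x \in \cT_1 \cup \cT_2$. Passing to characteristic functions, the characteristic function of an intersection of subsets of $\I$ is the pointwise product of the two characteristic functions, so $\chi_{\cT_1 \cup \cT_2} = \chi_{\cT_1}\chi_{\cT_2}$ as functions $\I \rightarrow \{0,1\}$.

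For the third part, I would evaluate the product $\prod_{\x \in \cS}(1 - \chi_{\x})$, a product of $\{0,1\}$-valued functions on $\I$, at a point $I \in \I$. By the first part there is some $\x \in \cS$ with $I \in \IT{\x}$, hence $\chi_{\x}(I) = 1$ and the factor $1 - \chi_{\x}(I)$ is zero, so the product vanishes at $I$; since $I$ was arbitrary, $\prod_{\x \in \cS}(1 - \chi_{\x}) = 0$. There is no real obstacle in any of this: the one non-formal ingredient, the existence for each $I$ of a coordinate subspace in $\Z_n$ restricting onto $I^*$, was already dealt with in Proposition~\ref{prop:extendZ_r}. What makes the lemma worth isolating is its intended use — combined with Quillen's theorem and the classes $\DTs{\cT}{s}$ of Lemma~\ref{lem:chi_T}, it turns the inclusion--exclusion identity hidden in the third part into a formula for $\kat{0}^{2^n}$ inside the Tezuka--Yagita subring.
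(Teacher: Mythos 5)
Your proposal is correct and follows the paper's own argument exactly: part one is the $r=n$ case of Proposition~\ref{prop:extendZ_r}, part two is immediate from the definitions, and part three is the characteristic-function-of-a-union identity, which your pointwise evaluation of $\prod_{\x \in \cS}(1-\chi_{\x})$ simply spells out. Nothing is missing.
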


\begin{proof}
Part~\ref{enum:cover} is a consequence of Proposition~\ref{prop:extendZ_r}.
Part~\ref{enum:intersection} is an immediate consequence of the definition.
Part~\ref{enum:union} now follows from the formula for the characteristic
function of a union.
\end{proof}

\begin{prop}
\label{prop:kappa_n}
Let $s \geq 2^n$.  Then
\begin{equation}
\label{eqn:kappa_n}
\kat{0}^s =
- \sum_{\emptyset \not = \cT \subseteq \cS} (-1)^{|\cT|} \DTs{\cT}{s} .
\end{equation}
\end{prop}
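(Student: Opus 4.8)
The plan is to prove~(\ref{eqn:kappa_n}) by Quillen detection: by the Corollary to Theorem~\ref{thm:Quillen_original}, it suffices to check that the two sides have equal restriction to~$\h*M$ for every \maxel. $M$~of $G$. So fix such an~$M$ and let $I = M/N$ be the corresponding \mtis. of~$E$. The left-hand side is immediate: by Proposition~\ref{prop:res_kz} we have $\Res_M \kat{0} = \DrV{0}{I^*}$, hence $\Res_M (\kat{0}^s) = \DrV{0}{I^*}^s$.

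For the right-hand side, note first that $|\cS| = 2^n \leq s$, so every nonempty $\cT \subseteq \cS$ satisfies $|\cT| \leq s$ and Lemma~\ref{lem:chi_T} applies to it; thus $\Res_M (\DTs{\cT}{s}) = \chi_{\cT}(I)\,\DrV{0}{I^*}^s$. Therefore
\begin{equation*}
\Res_M \Bigl( - \sum_{\emptyset \not= \cT \subseteq \cS} (-1)^{|\cT|} \DTs{\cT}{s} \Bigr)
= - \DrV{0}{I^*}^s \sum_{\emptyset \not= \cT \subseteq \cS} (-1)^{|\cT|} \chi_{\cT}(I) .
\end{equation*}
Now I would use Lemma~\ref{lem:MT_properties}(\ref{enum:intersection}) to write $\chi_{\cT}(I) = \prod_{\x \in \cT} \chi_{\x}(I)$, so that adjoining the omitted empty set gives
\begin{equation*}
\sum_{\emptyset \not= \cT \subseteq \cS} (-1)^{|\cT|} \chi_{\cT}(I)
= \prod_{\x \in \cS} \bigl(1 - \chi_{\x}(I)\bigr) - 1 .
\end{equation*}
By Lemma~\ref{lem:MT_properties}(\ref{enum:union}), or directly by the covering statement~(\ref{enum:cover}) which guarantees some $\x \in \cS$ with $\chi_{\x}(I) = 1$, this product vanishes, so the alternating sum equals~$-1$ and the right-hand side of~(\ref{eqn:kappa_n}) restricts to $-\DrV{0}{I^*}^s \cdot (-1) = \DrV{0}{I^*}^s$, agreeing with $\Res_M(\kat{0}^s)$. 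Since $M$ was arbitrary, Quillen's theorem finishes the argument. The degrees are consistent: $\kat{0} \in \h{2(p^n-1)}G$, so both sides lie in $\h{2s(p^n-1)}G$, the degree assigned to $\DTs{\cT}{s}$ in Lemma~\ref{lem:chi_T}.

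There is no serious obstacle here: the argument is a formal manipulation of characteristic functions (the inclusion--exclusion identity for the indicator of a union) combined with the translation of geometry into cohomology provided by Lemmas~\ref{lem:chi_T} and~\ref{lem:MT_properties}. The only point that needs care is bookkeeping around the hypothesis: $s \geq 2^n = |\cS|$ is exactly what is required so that \emph{every} nonempty $\cT \subseteq \cS$ has $|\cT| \leq s$, which is the condition under which $\DTs{\cT}{s}$ is defined and Lemma~\ref{lem:chi_T} gives its restriction. One should also remember that, because each $\DTs{\cT}{s}$ is already known to be independent of the chosen sequence $\x_1,\dots,\x_s$, the right-hand side of~(\ref{eqn:kappa_n}) is a well-defined element of $\h{2s(p^n-1)}G$ before the identity is even established.
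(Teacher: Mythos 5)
Your proposal is correct and follows essentially the same route as the paper: restrict to each \maxel. via Quillen's theorem, compute both sides using Proposition~\ref{prop:res_kz} and Lemma~\ref{lem:chi_T}, and conclude with the inclusion--exclusion identity $\prod_{\x \in \cS}(1 - \chi_{\x}(I)) = 0$ from Lemma~\ref{lem:MT_properties}. The only cosmetic difference is that the paper shows the restriction of $\kat{0}^s$ plus the sum vanishes, whereas you evaluate the alternating sum to $-1$ directly; the content is identical.
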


\begin{proof}
We of course use Quillen's Theorem.  Let~$M$ be a \maxel., and~$I$ the
associated \mtis..
By Proposition~\ref{prop:res_kz},
$\Res_M (\kat{0}^s) = \chi_{\emptyset} (I) \DrV{0}{I^*}^s$.
Now, by Lemma~\ref{lem:chi_T} and Lemma~\ref{lem:MT_properties},
\begin{align}
\Res_M \left( \kat{0} +
\sum_{\emptyset \not = \cT \subseteq \cS} (-1)^{|\cT|} \DTs{\cT}{s} \right)
& = \sum_{\cT \subseteq \cS} (-1)^{|\cT|} \chi_{\cT} (I) \DrV{0}{I^*}^s \\
& = \left( \prod_{\x \in \cS} \left(1 - \chi_{\x} (I) \right) \right)
\DrV{0}{I^*}^s \\
& = 0 \; ,
\end{align}
which proves the result.
\end{proof}

This result does not imply that $\kat{0}^s \not \in \TY$ whenever $s < 2^n$.
In fact, it is proved in~\cite{Green} that, for $n = 2$, $\kat{0}^s \in \TY$
if and only if $s \geq 2$.
However, the following lemma demonstrates that~$\kat{0}^s$ cannot be
expressed in terms of the~$\DTs{\cT}{s}$ if $s < 2^n$.

\begin{lemma}
\label{lem:MT_inequalities}
Let $\x \in \cS$, and let $\cT_1$~and $\cT_2$ be subsets of~$\cS$.  Then
\begin{enumerate}
\item \label{enum:nonempty}
$\IT{\cS}$ is not empty.
\item \label{enum:detect_x}
$\IT{\cS \setminus \x}$ is strictly larger than~$\IT{\cS}$.
\item \label{enum:equality}
$\IT{\cT_1} = \IT{\cT_2}$ if and only if $\cT_1 = \cT_2$.
\end{enumerate}
\end{lemma}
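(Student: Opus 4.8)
The plan is to deduce~(\ref{enum:equality}) from~(\ref{enum:detect_x}), to prove~(\ref{enum:nonempty}) by an explicit construction, and to prove~(\ref{enum:detect_x})---the heart of the matter---by exhibiting a suitable \mtis.\ directly. It is convenient to work in the dual space. Since $I$ is maximal totally isotropic we have $I^{\perp}=I$, so $L(I)=L(I^{\perp})=\Ann(I)=\ker(\Res_I\colon E^*\to I^*)$; hence for an $n$-dimensional subspace $\x$ of $E^*$ the condition $\Res_I(\x)=I^*$ is equivalent to $\x\cap L(I)=0$. Writing $W=L(I)$, which ranges over all Lagrangian (i.e.\ maximal totally isotropic) subspaces of $(E^*,\bL)$ as $I$ ranges over $\I$, the set $\IT{\cT}$ becomes $\{\,W\text{ Lagrangian}:\x\cap W=0\text{ for all }\x\in\cT\,\}$.

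Assuming~(\ref{enum:detect_x}), part~(\ref{enum:equality}) is easy: the ``if'' direction is trivial, and if $\cT_1\ne\cT_2$---say $\x\in\cT_1\setminus\cT_2$---then $\cT_2\subseteq\cS\setminus\x$ gives $\IT{\cS\setminus\x}\subseteq\IT{\cT_2}$ while $\IT{\cT_1}\subseteq\IT{\x}$, so the element of $\IT{\cS\setminus\x}$ lying outside $\IT{\x}$ furnished by~(\ref{enum:detect_x}) also lies in $\IT{\cT_2}\setminus\IT{\cT_1}$. Part~(\ref{enum:nonempty}) follows by taking $W=\Span(\astar_1+\bstar_1,\dots,\astar_n+\bstar_n)$: since $\bL(\astar_i+\bstar_i,\astar_j+\bstar_j)=0$ for all $i,j$, $W$ is Lagrangian, and a vector $\sum_ic_i(\astar_i+\bstar_i)$ lying in some $\x\in\cS$ must have vanishing coefficient on whichever of $\astar_i,\bstar_i$ is absent from the defining basis of $\x$, so all $c_i=0$; hence $W\cap\x=0$ for every $\x\in\cS$, i.e.\ $L^{-1}(W)\in\IT{\cS}$.

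For~(\ref{enum:detect_x}), fix $\x=\Span(y_1,\dots,y_n)\in\cS$ with $y_i\in\{\astar_i,\bstar_i\}$ and let $z_i$ be the other member of the $i$-th pair; we want a Lagrangian $W$ with $W\cap\x\ne0$ but $W\cap\x'=0$ for every other $\x'\in\cS$. Any such $W$ is disjoint from the member $\Span(z_1,\dots,z_n)$ of $\cS$, so it is the graph of a linear map $\phi\colon\x\to\Span(z_1,\dots,z_n)$, and the Lagrangian condition makes the bilinear form $(x,x')\mapsto\bL(x,\phi(x'))$ on $\x$ symmetric; let $N$ be its matrix in the basis $(y_i)$. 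Unwinding the remaining conditions, the member of $\cS$ obtained from $\x$ by switching the coordinates in a set $S\subseteq\{1,\dots,n\}$ meets $W$ in a subspace isomorphic to the kernel of the principal submatrix of $N$ got by deleting the rows and columns indexed by $S$. So the requirements on $W$ say exactly that $N$ is singular while every proper principal submatrix of $N$ is invertible. The plan is then to produce such an $N$, take $W$ to be its graph and $I=L^{-1}(W)$, and read off the claimed restriction behaviour of $\Res_I$ on $\cS$.

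Constructing $N$ is the main obstacle. For $n<p$ one may take $N=vv^{\top}-(v^{\top}v)I$ with $v=(1,\dots,1)^{\top}$: it is symmetric, its kernel is $\langle v\rangle$, and by the matrix determinant lemma the principal minor obtained by deleting the rows and columns in a proper nonempty $S$ is a unit multiple of $|S|\ne0$. For general $n$ a subtler symmetric matrix is required, and the natural approach is recursive: border an $(n-1)\times(n-1)$ symmetric matrix all of whose principal submatrices are invertible by a new row, column, and diagonal entry chosen to make the determinant vanish while keeping every proper principal minor a unit. I expect the delicate point to be verifying that this bordering can always be carried out over the small field $\f$---equivalently, that after reducing $(E^*,\bL)$ modulo the isotropic line $\langle y_1+\dots+y_n\rangle$ one can find a Lagrangian of the resulting $(2n-2)$-dimensional symplectic space missing the $2^n-1$ Lagrangians induced from the $\x'\in\cS\setminus\x$.
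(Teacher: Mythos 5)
Your dual-space reformulation is correct ($\Res_I(\x)=I^*$ if and only if $\x\cap L(I)=0$, since $L(I)=\Ann(I)$ for $I$ maximal totally isotropic), your deduction of part~(\ref{enum:equality}) from part~(\ref{enum:detect_x}) is fine, and your proof of part~(\ref{enum:nonempty}) is essentially the paper's: the paper takes the subgroup generated by the $X_i=B_iA_i$, whose image under $L$ is a ``diagonal'' Lagrangian of exactly the kind you exhibit. Your reduction of part~(\ref{enum:detect_x}) to finding a symmetric $n\times n$ matrix $N$ over $\f$ which is singular but all of whose proper principal submatrices are invertible is also correct, and is an illuminating way to see what the combinatorial content of the statement is.

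The gap is that you only produce such an $N$ when $n<p$, as you acknowledge. The matrix $vv^{\top}-nI$ fails outright when $p$ divides $n$, and even when it does not, its principal minors are unit multiples of $|S|$ and so vanish whenever $p$ divides $|S|$; nothing is therefore proved for $n\geq p$. The recursive bordering you sketch is genuinely problematic over the small field: with the simplest admissible seed $N'=I_{n-1}$, the conditions on the bordering vector $c$ amount to $\sum_{i\in S}c_i^2\neq 0$ for every nonempty $S\subseteq\{1,\dots,n-1\}$, and once $n-1\geq p$ no such $c$ exists, because any sequence of $p$ nonzero elements of $\f$ has a nonempty zero-sum subsequence (compare consecutive partial sums). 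So the bordering step requires a careful choice of $N'$ at every stage, and you have not shown this can be done. The paper sidesteps the whole issue by writing down explicit commuting elements $Y_r=B_1\cdots B_rA_rA_{r+1}^{-1}$ for $r\leq n-1$ and $Y_n=B_1^{2-n}\cdots B_n A_nA_1^{-1}$, generating a maximal totally isotropic subspace for which one checks directly that the only member of $\cS$ meeting $\Ann(I)$ nontrivially is $\Span(\astar_1,\dots,\astar_n)$, the intersection being spanned by $\astar_1+\cdots+\astar_n$; this works uniformly in $n$ and $p$. To complete your argument you would either have to exhibit a suitable $N$ for all $n$ (such matrices do exist --- the paper's $I$ yields one via your own dictionary --- but producing one is precisely the missing content) or fall back on an explicit construction such as the paper's.
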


\begin{proof}
For each $1 \leq i \leq n$, define $X_i = B_i A_i \in E$; then~$X_1$,
\dots,~$X_n$ generate a \mtis. which lies
in~$\IT{\cS}$.
The automorphism group of~$G$ acts transitively on the set of \maxel.s,
and so in part~\ref{enum:detect_x} we may assume without loss of generality
that $\x = \{ \text{$\astar_1$, \dots, $\astar_n$} \}$.
Define elements~$Y_1$, \dots,~$Y_n$ of~$E$ by
$Y_r = B_1 \ldots B_r A_r A_{r+1}^{-1}$ if $r \leq n-1$,
and $Y_n = B_1^{2-n} \ldots B_r^{r+1-n} \ldots B_n A_n A_1^{-1}$.
Then~$Y_1$, \dots,~$Y_n$ commute with each other, and generate a  \mtis.
which lies in~$\IT{\cS \setminus \x}$ but not
in~$\IT{\cS}$.
Finally, part~\ref{enum:equality} now follows.
\end{proof}

\paragraph{Integrality}
Each~$\kat{r}$ is integral over~$\TY$, by the Tezuka--Yagita theorem.
In this section, we obtain explicit monic polynomial equations satisfied
by the~$\kat{r}$, and prove that~$\kat{r}^{t p^r}$ lies in~$\TY$ for
sufficiently large~$t$.
Recall that~$E^*$ carries a nondegenerate symplectic form~$\bL$.

\begin{lemma}
\label{lem:Estar_correspondence}
Let~$V$ be a subspace of~$E^*$, and~$I$ a \mtis. of~$E$\@.
Suppose that $\dim \Res_I (V) = \dim (V)$.  Then
$\Res_I (V^{\perp}) = I^*$.
\end{lemma}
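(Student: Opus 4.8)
The plan is to translate both the hypothesis and the conclusion into statements about subspaces of $E^*$ by means of the isomorphism $L \colon E \to E^*$ and the form $\bL$, and then to finish by the elementary duality between $I$ and $I^*$. No use of Quillen's theorem or of the Dickson material is needed; this is purely linear algebra about the symplectic structure.

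First I would identify the kernel of the restriction map $\Res_I \colon E^* \to I^*$. Its kernel is the annihilator of $I$ in $E^*$, that is, $\{ f \in E^* : f|_I = 0 \}$. Writing $f = L(e)$, the condition $f|_I = 0$ reads $b(e,i) = 0$ for all $i \in I$, so this annihilator is $L(J)$, where $J$ is the $b$-perpendicular of $I$ inside $E$. Since $I$ is a maximal totally isotropic subspace, $\dim I = n$ and $J = I$, so $\ker \Res_I = L(I)$. Consequently the hypothesis $\dim \Res_I(V) = \dim(V)$ says exactly that $V \cap L(I) = 0$.

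Next I would compute the annihilator in $I$ of the subspace $\Res_I(V^\perp) \subseteq I^*$. For $i \in I$, this $i$ annihilates $\Res_I(V^\perp)$ iff $f(i) = 0$ for every $f \in V^\perp$. The identity to verify is $f(i) = \bL(f, L(i))$, which holds because, writing $f = L(e)$, both sides equal $b(e,i)$. Hence $i$ annihilates $\Res_I(V^\perp)$ iff $L(i)$ is $\bL$-orthogonal to all of $V^\perp$, i.e. $L(i) \in (V^\perp)^\perp$; and $(V^\perp)^\perp = V$ since $\bL$ is nondegenerate on the finite-dimensional space $E^*$. Combining this with $L(i) \in L(I)$ and the first step forces $L(i) \in V \cap L(I) = 0$, so $i = 0$. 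Thus $\Res_I(V^\perp)$ has trivial annihilator in $I$, and a subspace of the finite-dimensional space $I^*$ with trivial annihilator in $I$ must be all of $I^*$; this gives $\Res_I(V^\perp) = I^*$.

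The only genuinely delicate point is the bookkeeping with the two bilinear forms and the map $L$: checking the identity $f(i) = \bL(f, L(i))$ and using that a maximal totally isotropic subspace coincides with its own symplectic perpendicular. Once those are in hand, the argument is dimension-free and involves no computation or estimation.
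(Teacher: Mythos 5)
Your proof is correct and follows essentially the same route as the paper's: both arguments are pure symplectic linear algebra, translating the hypothesis into $V\cap L(I)=0$ via the identification of $\ker(\Res_I)$ with $L(I)$ (using $I^{\perp}=I$), and concluding by the standard duality between intersections and perpendiculars under the nondegenerate form. The only difference is cosmetic: the paper pulls everything back to $E$ by setting $A=L^{-1}(V)$ and cites ``$A^{\perp}+I=E$ if $I\cap A=0$'' as standard, whereas you stay in $E^{*}$ and spell that duality step out via annihilators.
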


\begin{proof}
Let $A$ be the subspace $L^{-1} (V)$ of~$E$.
Then $\dim \Res_I (V) = \dim (V)$ if and only if $I \cap A = 0$,
and $\Res_I (V) = I^*$ if and only if $A + I = E$.
So we must prove that $A^{\perp} + I = E$ if $I \cap A = 0$.
Since~$b$ is nondegenerate and $I^{\perp} = I$, this is standard linear
algebra.
\end{proof}

Let~$V$ be a finite-dimensional $\f$-vector space.  Choose one non-zero vector
from each one-dimensional subspace of~$V$, and define $\CnV{V} \in \Sym{V}$
to be the product of all these subspace representatives.
Then~$\CnV{V}$ is well-defined up to multiplication by a scalar, and
Macdonald's Theorem shows us that $\CnV{V}^{p-1} = \DrV{0}{V}$.

Recall that the inflation map~$\psi^*$ induces an $\Sym{E^*}$-module structure
on~$\h*G$.

\begin{prop}
\label{prop:K_rD_r}
Let $0 \leq r \leq n-1$; let $n-r \leq s \leq n$; and let~$V$ be an
$s$-dimensional subspace of~$E^*$.  Then
$\CnV{V} \left( \kat{r}^{p^{n-s}} - \DrV{n+r-s}{V^{\perp}} \right) = 0$ and
$\DrV{0}{V} \left( \kat{r}^{p^{n-s}} - \DrV{n+r-s}{V^{\perp}} \right) = 0$
in~$\h*{\pgp}$.
\end{prop}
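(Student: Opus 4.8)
The plan is to reduce the statement to a restriction computation and then invoke Quillen. First, Macdonald's Theorem (Theorem~\ref{thm:Macdonald}) gives $\CnV{V}^{p-1} = \DrV{0}{V}$, so the identity involving~$\DrV{0}{V}$ follows from the one involving~$\CnV{V}$ upon multiplying by~$\CnV{V}^{p-2}$; hence it suffices to prove that $\CnV{V}(\kat{r}^{p^{n-s}} - \DrV{n+r-s}{V^{\perp}})$ vanishes in~$\h*{\pgp}$. By the Corollary to Theorem~\ref{thm:Quillen_original} it is enough to show that this class restricts to~$0$ in~$\h*M$ for every \maxel. $M$ of~$\pgp$. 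So I would fix such an~$M$, with associated \mtis. $I = M/N$ of~$E$, and recall the description of restriction to~$M$ already used in the proof of Proposition~\ref{prop:res_kz}: the composite $\Sym{E^*} \stackrel{\psi^*}{\longrightarrow} \h*{\pgp} \stackrel{\Res_M}{\longrightarrow} \h*M \cong \Sym{I^*} \otimes_{\f} \f \lbrack \gamma \rbrack$ is the homomorphism of symmetric algebras induced by the surjective restriction of linear forms $E^* \to I^*$, and so takes values in the subring~$\Sym{I^*}$.

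I would then distinguish two cases. If $\dim \Res_I(V) < s$, then some non-zero $v \in V$ has $\Res_I(v) = 0$; since a non-zero scalar multiple of~$v$ occurs among the one-dimensional subspace representatives whose product is~$\CnV{V}$, this already forces $\Res_M(\CnV{V}) = 0$, and the restriction of the whole product vanishes. In the remaining case $\dim \Res_I(V) = \dim(V)$, Lemma~\ref{lem:Estar_correspondence} gives $\Res_I(V^{\perp}) = I^*$, so restriction of linear forms cuts out a surjection $V^{\perp} \to I^*$ whose kernel has dimension $(2n-s)-n = n-s$. The heart of the matter is that the induced homomorphism $\Sym{V^{\perp}} \to \Sym{I^*}$ sends $\DrV{n+r-s}{V^{\perp}}$ to $\DrV{r}{I^*}^{p^{n-s}}$; combined with $\Res_M(\kat{r}) = \DrV{r}{I^*}$ from Proposition~\ref{prop:res_kz}, this yields $\Res_M(\kat{r}^{p^{n-s}}) = \DrV{r}{I^*}^{p^{n-s}} = \Res_M(\DrV{n+r-s}{V^{\perp}})$, so the second factor of the product already restricts to~$0$. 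As~$M$ was arbitrary, \Quillen. finishes the argument.

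The only genuine obstacle is that Dickson-invariant identity in the last step, \ie the behaviour of Dickson invariants under a surjection $W \to Q$ of $\f$-vector spaces whose kernel has dimension~$\ell$. I would obtain it either as Lemma~\ref{lem:Dickson_larger} applied to the inclusion dual to that surjection, or directly by pushing the defining product~(\ref{eqn:Dickson}) for~$W$ through the induced symmetric-algebra homomorphism: the image is $\bigl(\prod_{z \in Q}(X - z)\bigr)^{p^{\ell}}$, and since raising to the $p^{\ell}$-th power is the Frobenius and the attendant signs are harmless ($p$ being odd), comparing coefficients of the powers of~$X$ shows $\DrV{j}{W} \mapsto \DrV{j-\ell}{Q}^{p^{\ell}}$ when $\ell \le j$ and $\DrV{j}{W} \mapsto 0$ otherwise; here $W = V^{\perp}$, $Q = I^*$, $\ell = n-s$ and $j = n+r-s$, so $j - \ell = r$. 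The numerical hypotheses $0 \le r \le n-1$ and $s \le n$ are precisely what makes this bookkeeping legitimate: they guarantee that $\DrV{n+r-s}{V^{\perp}}$ and $\DrV{r}{I^*}$ are bona fide Dickson invariants of the $(2n-s)$- and $n$-dimensional spaces $V^{\perp}$ and~$I^*$, and that $p^{n-s}$ is a non-negative power, so that every expression in the statement is defined. Nothing in this argument pins down \emph{which} power of~$\kat{r}$ lies in~$\TY$; that sharpening is the task of the results that follow.
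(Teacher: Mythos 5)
Your proposal is correct and follows essentially the same route as the paper: restrict to a \maxel. $M$, kill the first factor via $\Res_M(\CnV{V})=0$ when $\dim\Res_I(V)<\dim(V)$, and otherwise use Lemma~\ref{lem:Estar_correspondence} together with the behaviour of Dickson invariants under the surjection $V^{\perp}\to I^*$ (Lemma~\ref{lem:Dickson_larger}) and Proposition~\ref{prop:res_kz} to kill the second factor, finishing with Quillen's theorem. The only cosmetic differences are that you deduce the $\DrV{0}{V}$ identity from the $\CnV{V}$ one by multiplying by $\CnV{V}^{p-2}$, whereas the paper treats both at once, and that you spell out the Dickson restriction computation in more detail.
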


\begin{proof}
By \Quillen., it suffices to prove these equalities
after restriction to each \maxel.~$M$ of~$G$.  For such an~$M$, let~$I$ be
the corresponding \mti. subspace~$I$ of~$E$.  If
$\dim \Res_I (V) < \dim (V)$, then~$\DrV{0}{V}$, and hence~$\CnV{V}$,
restrict to zero by
Lemma~\ref{lem:Dickson_larger}.
Otherwise $\dim \Res_I (V) = \dim (V)$, and so
Lemma~\ref{lem:Estar_correspondence} tells us that
$\Res_I (V^{\perp}) = I^*$.  Applying Lemma~\ref{lem:Dickson_larger} again,
$\Res_M \left( \DrV{n+r-s}{V^{\perp}} \right) = \DrV{r}{I^*}^{p^{n-s}}$:
but this is also $\Res_M (\kat{r}^{p^{n-s}})$, by Proposition~\ref{prop:res_kz}.
\end{proof}

\begin{coroll}
\label{cor:K_rP_VY}
Let $0 \leq r \leq n-1$; let~$V$ be a $(n + s)$-dimensional subspace of~$E^*$
for some $0 \leq s \leq r$; and let~$Y$ be a $2s$-dimensional subspace
of~$V$.  For any complementary subspace~$W$ of~$Y$ in~$V$,
the equation $\kat{r}^{p^s} P_{V,Y} = \DrV{r+s}{W^{\perp}} P_{V,Y}$
holds in~$\h*G$.  Hence $\kat{r}^{t p^s} \DrV{2s}{V} \in \TY$ for all
$t \geq 1$.
\end{coroll}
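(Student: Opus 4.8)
The plan is to deduce the corollary from Proposition~\ref{prop:K_rD_r} by a projection argument modelled on the proof of Macdonald's Theorem (Theorem~\ref{thm:Macdonald}). First I would set up the reduction: given $V$ of dimension $n+s$ with a $2s$-dimensional subspace $Y$ and a complement $W$ of dimension $n-s$, I want to project everything down into $\Sym{W^*}$-land. Since $W$ has dimension $n - s = n - (n+s) + 2s$, i.e.\ it is $2s$-codimensional in $V$, Lemma~\ref{lem:Dickson_larger} applies to the inclusion $W \hookrightarrow V$; but the cleaner route is to pass to quotients: $V/Y$ is canonically isomorphic to (the image of) $W$, and under the induced map $\Sym{V} \to \Sym{V/Y}$ the product $P_{V,Y} = \prod_{v \in V \setminus Y} v$ maps to $P_{W,0}^{p^{2s}} = \DrV{0}{W}^{p^{2s}}$, exactly as in the proof of Theorem~\ref{thm:Macdonald}, while every other monomial involving a vector of $Y$ maps to zero. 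Dually, $W^{\perp}$ sits inside $V^{\perp}$ appropriately; the point is that $W$ being $(n-s)$-dimensional makes $W^{\perp}$ of dimension $2n - (n-s) = n+s$, so $\DrV{r+s}{W^{\perp}}$ makes sense, and one checks $n + (r) - (n+s) = r - s$ would be the wrong index — instead $W$ has codimension... — so I would invoke Proposition~\ref{prop:K_rD_r} with the subspace $W$ in place of $V$, with its $s' := \dim W = n - s$ playing the role of ``$s$'' there (note $n - r \le n - s \le n$ exactly when $s \le r$, which is our hypothesis), yielding $\DrV{0}{W}\bigl(\kat{r}^{p^{n - (n-s)}} - \DrV{n + r - (n-s)}{W^{\perp}}\bigr) = \DrV{0}{W}\bigl(\kat{r}^{p^{s}} - \DrV{r+s}{W^{\perp}}\bigr) = 0$ in $\h*G$.

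With that identity in hand, the first equation $\kat{r}^{p^s} P_{V,Y} = \DrV{r+s}{W^{\perp}} P_{V,Y}$ should follow by multiplying through by a suitable element of $\Sym{E^*}$ and using the $\Sym{E^*}$-module structure on $\h*G$. Concretely, I would argue via Quillen's Theorem directly rather than algebraically manipulating the module relation: restrict to an arbitrary \maxel.~$M$ with associated \mtis.~$I$. If $\dim \Res_I(V) < \dim(V)$ then $\Res_M(P_{V,Y}) = 0$ (since $P_{V,Y}$ is divisible in $\Sym{V}$ by vectors spanning $V$, at least one of which restricts to a vector already hit — more carefully, $P_{V,Y}$ times $\DrV{0}{Y}$ is, up to sign, a power of $\DrV{0}{V}$, which restricts to zero), so both sides vanish. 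Otherwise $\Res_I(V) = I^*$; then I would show $\Res_I(W) = I^*$ as well — this needs that $Y$ restricts injectively, or rather I should choose the reduction so that Lemma~\ref{lem:Estar_correspondence} and Lemma~\ref{lem:Dickson_larger} give $\Res_M(\DrV{r+s}{W^{\perp}}) = \DrV{r}{I^*}^{p^{s}} = \Res_M(\kat{r}^{p^s})$ by Proposition~\ref{prop:res_kz}, and the two sides agree after multiplying by $\Res_M(P_{V,Y})$.

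Finally, for the last assertion, sum the identity $\kat{r}^{p^s} P_{V,Y} = \DrV{r+s}{W^{\perp}} P_{V,Y}$ over all $2s$-dimensional subspaces $Y \in \V_{2s}$ of $V$: by Macdonald's Theorem (Theorem~\ref{thm:Macdonald}) the left-hand side sums to $\pm\,\kat{r}^{p^s} \DrV{2s}{V}$, while the right-hand side is manifestly a sum of products of $\DrV{r+s}{W^{\perp}}$ (which lies in $\TY$, being a Dickson invariant of a subspace of $E^*$ — here I use that $\psi^*$ carries $\Sym{E^*}$ into $\TY$, so all Dickson invariants of subspaces of $E^*$, and hence the products $P_{V,Y}$, land in $\TY$) with elements of $\TY$, hence lies in $\TY$. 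Thus $\kat{r}^{p^s}\DrV{2s}{V} \in \TY$, and multiplying by further powers of $\DrV{2s}{V} \in \TY$ gives $\kat{r}^{t p^s}\DrV{2s}{V} \in \TY$ for all $t \ge 1$ (or one iterates: $\kat{r}^{2 p^s}\DrV{2s}{V} = \kat{r}^{p^s}(\kat{r}^{p^s}\DrV{2s}{V}) \in \kat{r}^{p^s}\TY$, and then re-apply).

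The main obstacle I anticipate is bookkeeping the index shift and the complementary-subspace choice correctly: one must verify that $W^{\perp}$ has the right dimension for $\DrV{r+s}{W^{\perp}}$ to be defined (i.e.\ $r + s \le \dim W^{\perp} - 1 = n + s - 1$, equivalently $r \le n-1$, which holds), and that the hypothesis $s \le r$ of the corollary is exactly what makes Proposition~\ref{prop:K_rD_r} applicable to $W$ with its dimension $n-s \in [n-r, n]$. The restriction-to-$M$ verification in the ``otherwise'' case also requires care: one needs $\dim \Res_I(W) = \dim(W)$, which should follow because $\Res_I(V) = I^*$ forces $\Res_I$ to be injective on a complement of $V \cap (\text{kernel})$ — but since $\dim V = n + s > n = \dim I^*$, $\Res_I$ cannot be injective on all of $V$, so one must use that $Y$ is chosen inside the kernel, or rather argue at the level of the symmetric-algebra projection (as in Theorem~\ref{thm:Macdonald}) rather than subspace restrictions. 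I would resolve this by working with the quotient map $\Sym{V} \to \Sym{V/Y} \cong \Sym{\Res\text{-image}}$ throughout, keeping the argument parallel to Macdonald's proof, which is the cleanest way to make the powers of $p$ come out right.
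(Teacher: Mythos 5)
Your skeleton matches the paper's: apply Proposition~\ref{prop:K_rD_r} to the complement $W$ --- your index bookkeeping ($\dim W = n-s$, the hypothesis $s\leq r$ giving $n-r\leq n-s\leq n$, and $n+r-(n-s)=r+s$) is exactly right --- obtaining $\DrV{0}{W}\bigl(\kat{r}^{p^s}-\DrV{r+s}{W^{\perp}}\bigr)=0$, and then sum over $Y$ using Macdonald's Theorem, iterating the first equation to handle general $t$. Those parts are fine. The gap is at the one step that constitutes the paper's entire stated proof: passing from the identity with coefficient $\DrV{0}{W}$ to the identity with coefficient $P_{V,Y}$. The needed observation is that $\DrV{0}{W}$ \emph{divides} $P_{V,Y}$ in $\Sym{E^*}$: since $W\cap Y=0$, every nonzero element of $W$ lies in $V\setminus Y$, and $\DrV{0}{W}$ is (up to sign) the product of the nonzero elements of $W$. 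Your ``suitable element of $\Sym{E^*}$'' is then $P_{V,Y}/\DrV{0}{W}$, and the $\Sym{E^*}$-module structure finishes the argument. You gesture at this but never identify the divisor, and instead commit to a direct restriction argument that does not work.

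That restriction argument fails because the dichotomy is taken on the wrong subspace. For $s\geq 1$ one has $\dim(V)=n+s>n=\dim(I^*)$, so $\dim\Res_I(V)<\dim(V)$ for every \mtis. $I$: your ``otherwise'' branch is vacuous, and your first branch asserts $\Res_M(P_{V,Y})=0$ for all $M$, which by Macdonald's Theorem would force $\DrV{2s}{V}=0$ in $\h*G$ --- false, since when $\dim\bigl(V\cap L(I)\bigr)=s$ the class $\DrV{2s}{V}$ restricts to $\DrV{s}{I^*}^{p^s}\neq 0$. (Your fallback via $P_{V,Y}\DrV{0}{Y}=\pm\DrV{0}{V}$ only kills $\Res_M(P_{V,Y})$ when $\Res_M\DrV{0}{Y}\neq 0$, which need not hold.) The correct dichotomy is on $\dim\Res_I(W)$: if it drops, then $\Res_M\DrV{0}{W}=0$ and hence $\Res_M P_{V,Y}=0$, again by the divisibility above; if not, Lemmas~\ref{lem:Estar_correspondence} and~\ref{lem:Dickson_larger} give $\Res_M\DrV{r+s}{W^{\perp}}=\DrV{r}{I^*}^{p^s}=\Res_M\bigl(\kat{r}^{p^s}\bigr)$, so the two sides agree. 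Either way, $\DrV{0}{W}\mid P_{V,Y}$ is the missing ingredient; with it in place, your Macdonald summation and the iteration over $t$ go through, and all the quantities involved lie in $\TY=\psi^*\Sym{E^*}$ as you say.
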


\begin{proof}
Observe that $\DrV{0}{W}$~divides $P_{V,Y}$.
The last part follows by Macdonald's Theorem.
\end{proof}

%
%
%

\begin{theorem}
\label{thm:kappa_integral}
For every $0 \leq r \leq n-1$ and every $0 \leq s \leq n$,
\begin{equation}
\displaystyle\prod_{V \in \Z_s} \left( \kat{r}^{p^{n-s}} -
\DrV{n+r-s}{V^{\perp}} \right) = 0 \; .
\end{equation}
In particular,
$\kat{r}^{t p^r} \in \TY$ for all $t \geq 2^{n-r}$.
\end{theorem}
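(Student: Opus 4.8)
The plan is to prove the displayed product formula by restricting to an arbitrary \maxel. $M$ and invoking \Quillen., exactly as in the proof of Proposition~\ref{prop:kappa_n}. Fix $M$, with corresponding \mtis. $I$ of $E$. By Proposition~\ref{prop:K_rD_r}, each factor $\kat{r}^{p^{n-s}} - \DrV{n+r-s}{V^{\perp}}$ restricts to $M$ as $0$ whenever $\dim \Res_I (V) = \dim (V)$, since then $\Res_I (V^{\perp}) = I^*$ by Lemma~\ref{lem:Estar_correspondence} and the two terms agree after restriction. So the whole product restricts to zero provided there exists \emph{at least one} $V \in \Z_s$ with $\dim \Res_I (V) = s$. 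But that is precisely the content of the last sentence of Proposition~\ref{prop:extendZ_r}: for every $0 \leq s \leq n$ and every $I$, some $V \in \Z_s$ restricts isomorphically to an $s$-dimensional subspace of $I^*$. Hence $\Res_M$ of the product vanishes for every $M$, and \Quillen. gives the displayed identity.

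For the ``in particular'' statement, I would specialise to $s = n+r-r = \dots$; more precisely, choose $s$ so that the exponent $p^{n-s}$ on $\kat{r}$ equals $p^{n-r}$ is not quite what is wanted --- instead take $s$ with $n-s$ interacting correctly. The cleanest route is via Corollary~\ref{cor:K_rP_VY}: take $V \in \Z_s$ with $s$ chosen appropriately and apply the relation $\kat{r}^{p^{n-s}} \DrV{0}{V} = \DrV{n+r-s}{V^{\perp}} \DrV{0}{V}$ obtained by multiplying the product formula's vanishing factor, or directly from Proposition~\ref{prop:K_rD_r}, through by $\DrV{0}{V}$. Summing such relations over all $V \in \Z_s$ and using Macdonald's Theorem (Theorem~\ref{thm:Macdonald}) to recognise $\sum_{V \in \Z_s} \DrV{0}{V}$-type sums as Dickson invariants living in $\TY$ will express a suitable power of $\kat{r}$ as an element of $\TY$. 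One checks that the power obtained is $\kat{r}^{t p^r}$ with $t \geq 2^{n-r}$: the factor $p^r$ comes from taking $s$ so that $p^{n-s} = p^r$, i.e.\ $s = n-r$, whence $\Z_{n-r}$ has cardinality $\binom{n}{n-r}$-many... rather $2^{n-r}$ relevant subspaces need to be combined, accounting for the bound $t \geq 2^{n-r}$, in analogy with the $r=0$ case of Proposition~\ref{prop:kappa_n} where $s = n$ and the bound is $2^n$.

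The main obstacle is bookkeeping in this second part: one must verify that the characteristic-function argument of the ``Characteristic functions'' section generalises from $\kat{0}$ (where $\DrV{0}{V}$ restricts to $0$ or $\DrV{0}{I^*}^s$, a clean $0/1$ indicator) to $\kat{r}$ with $r \geq 1$, where the relevant restrictions involve $\DrV{n+r-s}{V^{\perp}}$ and one must track both whether $\Res_I(V)$ has full dimension \emph{and} the resulting Dickson invariant of $I^*$. In particular the analogue of Lemma~\ref{lem:MT_properties}\ref{enum:union} must be set up for the subspaces in $\Z_{n-r}$ rather than $\Z_n$, and one must confirm that Proposition~\ref{prop:extendZ_r} supplies enough such $V$ to make the product over $\Z_{n-r}$ of the indicator-complements vanish. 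Once the product formula is in hand, though, the integrality conclusion is a formal consequence: expanding $\prod_{V \in \Z_{n-r}} (\kat{r}^{p^r} - \DrV{r}{V^{\perp}}) = 0$ gives a monic polynomial equation for $\kat{r}^{p^r}$ over $\TY$ of degree $|\Z_{n-r}| = 2^{n-r}$, so $\kat{r}^{t p^r} \in \TY$ for all $t \geq 2^{n-r}$.
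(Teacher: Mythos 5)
Your proof of the displayed product formula is correct and is essentially the paper's own argument: restrict to a maximal elementary abelian subgroup $M$ with corresponding maximal totally isotropic subspace $I$, use the last sentence of Proposition~\ref{prop:extendZ_r} to produce some $V \in \Z_s$ with $\dim \Res_I (V) = s$, deduce $\Res_I (V^{\perp}) = I^*$ from Lemma~\ref{lem:Estar_correspondence}, so that the corresponding factor restricts to $\DrV{r}{I^*}^{p^{n-s}} - \DrV{r}{I^*}^{p^{n-s}} = 0$ by Lemma~\ref{lem:Dickson_larger} and Proposition~\ref{prop:res_kz}, and finish with Quillen's theorem. (Your citation of Proposition~\ref{prop:K_rD_r} here is inessential; the direct argument you give alongside it is what is needed and is what the paper does.)

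The second half has a genuine gap. Taking $s = n-r$ gives $\prod_{V \in \Z_{n-r}} \bigl( \kat{r}^{p^r} - \DrV{2r}{V^{\perp}} \bigr) = 0$ (note the index is $2r$, not $r$), and this is indeed a monic equation of degree $|\Z_{n-r}| = 2^{n-r}$ for $\kat{r}^{p^r}$ over $\TY$; but such an equation is \emph{not} formally enough. It only places $\kat{r}^{2^{n-r} p^r}$ in the $\TY$-module spanned by $1, \kat{r}^{p^r}, \dots, \kat{r}^{(2^{n-r}-1)p^r}$, not in $\TY$ itself --- compare $x^2 = ax + b$ with $a, b \in \TY$, which does not yield $x^2 \in \TY$ unless $x \in \TY$. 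The missing ingredient is exactly Corollary~\ref{cor:K_rP_VY} with $s = r$ applied to each $(n+r)$-dimensional space $V^{\perp}$: it gives $\kat{r}^{t p^r} \DrV{2r}{V^{\perp}} \in \TY$ for all $t \geq 1$. Since every non-leading term in the expansion of the product contains at least one factor $\DrV{2r}{V^{\perp}}$, and all remaining factors are inflations from $\Sym{E^*}$ and hence lie in $\TY$, each such term lies in $\TY$, whence so does $\kat{r}^{2^{n-r}p^r}$; multiplying the vanishing product by $\kat{r}^{(t-2^{n-r})p^r}$ handles general $t \geq 2^{n-r}$. This is how the paper's appeal to Corollary~\ref{cor:K_rP_VY} is meant to be used. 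Your alternative suggestion --- summing relations of the form $\kat{r}^{p^{n-s}} \DrV{0}{V} = \DrV{n+r-s}{V^{\perp}} \DrV{0}{V}$ over $V \in \Z_s$ and invoking Macdonald's theorem --- does not work, because Macdonald's formula sums over \emph{all} subspaces of the given dimension, not just the $2^s$ coordinate subspaces in $\Z_s$, so $\sum_{V \in \Z_s} \DrV{0}{V}$ is not a Dickson invariant and need not lie usefully in $\TY$.
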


\begin{proof}
We use \Quillen..  Let~$M$ be a \maxel. of~$G$, and $I$~the corresponding
\mtis. of~$E$.
By Proposition~\ref{prop:res_kz}, the restriction of $\kat{r}$~to~$M$
is~$\DrV{r}{I^*}$.
By Proposition~\ref{prop:extendZ_r}, there is
some $V \in \Z_s$ whose restriction to~$I$ has dimension $s = \dim (V)$.
Then by Lemma~\ref{lem:Estar_correspondence},
the restriction of~$V^{\perp}$ is~$I^*$.
Hence $\kat{r}^{p^{n-s}} - \DrV{r}{V^{\perp}}$ restricts to zero by
Lemma~\ref{lem:Dickson_larger}.
The last part follows by Corollary~\ref{cor:K_rP_VY}.
\end{proof}




\end{document}